\documentclass[11pt, reqno]{amsart}
\usepackage{amsmath,amsfonts,amsthm,amssymb,latexsym}
\usepackage[a4paper,margin=3cm]{geometry}
\usepackage{graphicx}
\usepackage{url}
%%%%% APPELS DE PACKAGES %%%%%%%%%%%%%%%%%%%%
\usepackage[latin1]{inputenc}
\usepackage[T1]{fontenc}
\usepackage{graphicx}
\usepackage[frenchb,english]{babel}
\usepackage{calc}
\usepackage{amsmath,amsthm}
\usepackage{url}
\usepackage{times, amssymb, amscd, mathrsfs, graphicx, color}
\usepackage{enumerate}
\usepackage{hyperref}

\selectlanguage{english}

\definecolor{darkred}{rgb}{0.9,0.1,0.1}

\setlength{\marginparwidth}{2cm}

\newtheorem{theo}{Theorem}[section]

\newtheorem{coro}[theo]{Corollary}
\newtheorem{lem}[theo]{Lemma}

\newtheorem{Rq}[theo]{Remark}

\theoremstyle{plain}
\newtheorem*{hypo*}{Assumptions}

%\newtheorem{Not}{Notation}
%\renewcommand{\theNot}{\empty{}}

%%%%% COMMANDES PERSONNALISEES %%%%%%%%%%%%%%

%\newcommand{\1}{\mathds{1}}
\newcommand{\un}{\mathbf{1}}
\newcommand{\N}{\mathbb{N}}                                              % diverses redfinitions :
                                              % pour obtenir par exemple la
                                              % lettre de l'ensemble des rels,
                                              % on n'aura plus besoin de taper
\newcommand{\R}{\mathbb{R}}                                              % \mathbb{R}, mais simplement \R.
\newcommand{\C}{\mathbb{C}}
\newcommand{\p}{\mathbb{P}}
\newcommand{\E}{\mathbb{E}}                                            %pr avoir les P de proba tt de suite et E pr l esperance
 %pour les combinaison

%pour l'égalité en loi

%\topmargin -2cm \oddsidemargin 1cm \evensidemargin 1cm \textheight 25cm         % pour rgler les marges

%\addtolength{\hoffset}{-1cm} % commence le texte 1 cm de plus  gauche
%\addtolength{\textwidth}{ 2 cm} % ecrit 2cm de plus sur les cts
%les commandes ci dessus rgle la marge; nb pr le haut et bas \voffset \textheight

%%%%% PROPRIETES DU DOCUMENT %%%%%%%%%%%%%%%%
%\title[Simulation of quasi-stationary distributions]{A dynamical system approach to simulate quasi-stationary distributions on finite spaces} %
\title[Simulation of quasi-stationary distributions]{A Stochastic Approximation Approach to Quasi-Stationary Distributions on Finite Spaces} %

\date{\today}

\author{ Michel~\textsc{Bena\"{i}m}} %
\address[Michel~\textsc{Bena\"{i}m}]{Institut de Math\'ematiques, Universit\'e de Neuch\^atel, Suisse} %
\email{\url{michel.benaim@unine.ch}}

\author{ Bertrand~\textsc{Cloez}}  %
\address[Bertrand~\textsc{Cloez}]{UMR INRA-SupAgro MISTEA, Montpellier, France}

\email{\url{Bertrand.Cloez@supagro.inra.fr}}%
%\urladdr{\url{??}}

 %Partially supported by ANR-11-LABX-0040-CIMI within the program ANR-11-IDEX-0002-02
\thanks{Partially supported by ANR-11-LABX-0040-CIMI within the program ANR-11-IDEX-0002-02}
%\keywords{Birth-death process; Feynman-Kac semigroup; discrete gradients;
  %intertwining relation; functional inequalities.}

%\subjclass[2000]{60J80 ; 39C05}

\parindent=0pt %retire les alinea

%%%%% COMPOSITION DU TEXTE %%%%%%%%%%%%%%%%%%
\begin{document}

\maketitle

\begin{abstract}

This work is concerned with the analysis of a stochastic approximation algorithm for the simulation of quasi-stationary
 distributions on finite state spaces. This is a generalization of a method introduced by Aldous, Flannery and Palacios.
   It is shown that the asymptotic behavior of the empirical occupation measure of this process is precisely related to the asymptotic
   behavior of some deterministic dynamical system induced by a vector field on the unit simplex.
   This approach provides new proof of convergence as well as precise asymptotic rates for this type of algorithm.
   In the last part, our convergence results are compared with those of a particle system algorithm (a discrete-time version of the Fleming-Viot algorithm).
\end{abstract}

{\footnotesize \textbf{AMS 2010 Mathematical Subject Classification:} 65C20,  60B12, 60J10; secondary 34F05, 60J20 }

{\footnotesize \textbf{Keywords:} Quasi-stationary distributions - approximation method - reinforced random walks - random perturbations of dynamical
systems.}

\renewcommand*\abstractname{ \ }
\begin{abstract}
\tableofcontents
\end{abstract}

%\begin{document}

\section{Introduction}
Let $(Y_n)_{n\geq 0}$ be a Markov chain on a finite state space $F$ with transition matrix $P=(P_{i,j})_{i,j \in F}$. We assume that this process admits an (attainable) absorbing state, say $0$, and that $F^* = F\backslash \{ 0 \}$ is an irreducible class for $P$; this means that $P_{i,0}>0$ for some $i \in F^*$, $P_{0,i}=0$ for all $i\in F^*$ and $\sum_{k\geq 0} P^k_{i,j}>0$ for all $i,j\in F^*$. Note that there is no assumption here that  $P$ is aperiodic. For all $i\in F$ and any probability measure $\mu$ on $F$ (or $F^*$), we set
$$
\p_i \left( \ \cdot \ \right) = \p\left( \ \cdot \ | \ Y_0= i \right), \ \p_\mu =\sum_{i \in F} \mu(i) \p_i,
$$
and we let  $\E_i, \E_\mu$ denote the corresponding expectations. Classical results (i.e. \cite{DS65} and \cite[Theorem 2 p 53, Vol. 2]{G59}) imply that $Y_n$ is absorbed by $0$ in finite time and admits a unique probability measure $\nu$ on $F^*$, called a {\em quasi-stationary distribution} (QSD), satisfying, for every $k \in F^*$,
$$
 \nu(k) = \p_\nu ( Y_1 = k \ | \ Y_1 \neq 0) = \frac{\sum_{i \in F^*}\nu(i) P_{i,k}}{\sum_{i,j \in F^*} \nu(i) P_{i,j}}= \frac{\sum_{i \in F^*}\nu(i) P_{i,k}}{1-\sum_{i \in F^*} \nu(i) P_{i,0}}.
$$
If we furthermore assume that $P$ is aperiodic, then (see for instance \cite[Section 4]{DS65}) for any probability measure $\mu$ on $F^*$ and $k \in F^*$,
\begin{equation}
\label{eq:QLD}
\lim_{n \rightarrow + \infty} \p_\mu ( Y_n = k \ | \ Y_n \neq 0) =\nu(k).
\end{equation}
The existence and uniqueness of this measure can be proved through the Perron-Frobenius Theorem \cite[Theorem 2 p 53, Vol. 2]{G59} because  a probability measure $\nu$ is a QSD if and only if it is a left eigenvector of $P$ (associated to some eigenvalue $\lambda \in (0,1)$)\cite{DS65}; namely
\begin{equation}
\label{eq:vect-prop}
\nu P = \lambda \nu \Leftrightarrow \forall k \in F^*, \ \sum_{i\in F^*} \nu(i) P_{i,k} = \lambda \nu(k).
\end{equation}
Summing on $k$ the previous expressions gives the following expression of $\lambda$:
\begin{equation}
\label{eq:val-prop}
\lambda =  1 - \sum_{i\in F^*} \nu(i) P_{i,0}.
\end{equation}
Quasi-stationary distributions have many applications as illustrated for instance in \cite{CMM13,MV12,P,vDP13} and their computation is of prime importance.  This can be achieved with deterministic algorithms coming from numerical analysis \cite[section 6]{vDP13} based on equation \eqref{eq:vect-prop}, but these type of method fails to be efficient with  large state spaces. An alternative approach is to use stochastic algorithms  (even if naive Monte-Carlo methods are not well-suited as illustrated in the introduction of \cite{these-V}).
Our main purpose here is to analyze a class of such algorithms based on a method
 that was introduced by Aldous, Flannery and Palacios \cite{AFP} and which can be described as follows.

Let $\Delta$ be the unit simplex of probabilities over $F^*.$ For $x\in \Delta$, let $K[x]$ be a Markov kernel defined by
\begin{equation}
\label{eq:Kmu}
\forall i,j\in F^*, \ K[x]_{i,j} = P_{i,j} +  P_{i,0} x(j).
\end{equation}
and let  $(X_n)_{n\geq 0}$ be a process on $F^*$ such that for every $n\geq 0$,
\begin{equation}
\label{eq:transition1}
\forall i,j \in F^*, \ \p\left(X_{n+1} =j \ | \ \mathcal{F}_n  \right)= K[x_n]_{i,j}, \mbox{ on } \{X_n = i\},
\end{equation}
where
\begin{equation}
\label{eq:def-emp}
x_n = \frac{1}{n+1} \sum_{k=0}^{n} \delta_{X_k}
\end{equation}
stands for the {\em empirical occupation measure} of the process
and $\mathcal{F}_n= \sigma\{X_k, k\leq n\}.$
In words, the process behaves like $(Y_n)_{n\geq 0}$ until it dies (namely it hits $0$) and, when it dies, comes back to life in a state randomly chosen according to it's empirical occupation measure.

%Note that, we will use a slightly different algorithm which allows us to choose a non-uniform measure on the past.
 This process is not Markovian
and can be understood as an urn process or a reinforced random walk.
Using the natural embedding of  urn processes into  continuous-time multi-type branching processes
\cite[section V.9]{AN04}, Aldous, Flannery and Palacios prove the convergence of $(x_n)$ to the QSD. As well illustrated in \cite{P07}, another powerful method for analyzing the behavior of processes with reinforcement is stochastic approximation theory \cite{BMP,D96} and its dynamical system counterpart \cite{B99}. Relying on this approach, we analyze a more general algorithm in which $(x_n)_{n\geq 0}$ is a weighted empirical measure. We then recover \cite[Theorem 3.8]{AFP} in this more general context with explicit rates of convergence. We also a provide a central limit theorem and prove the convergence of $(X_n)_{n\geq 0}.$ Note that,  when $\gamma_n=1/n$ the recent work \cite{BGZ} also provides a central limit theorem, using similar techniques.
This enables us to compare its convergence rates with a different algorithm which is a discrete-time version of the algorithm studied in \cite{BHM00,CT13, MM00, V11}
and is close (but different) to the one used in \cite{DM13, MG99}. We only give some qualitative bounds and we do not compare these two algorithms in terms of complexity.
 % In this second algorithm the sequence $(\mu_r)_r$ correspond to the spatial occupation measure of a particle system.
  We describe it and give a new bound for the convergence based on \cite{BW03} in section \ref{sect:FV}.
  %Also note that the process defined in \eqref{eq:def-emp} is an instance of the (time) self-interacting Markov chain models studied in \cite{DM04,DM06} and we also extend some of their results in this particular case (that is a finite state space). Indeed, when the state space is finite, \cite[Theorem 1.2]{DM06} and \cite[Theorem 2.2]{DM04} give a $L^1-$bound for the convergence under a strong mixing assumption which is not always satisfied (a Doeblin type condition). However the setting of theses two works is a general Polish space as state space and a more general kernel $K[x]$. They then capture more examples than us but in our setting our results are stronger.

\ \\

\textbf{Outline:} the next subsection introduces our main results. The proofs are in section \ref{sect:proof}. We study the dynamical system in \ref{sect:flow}, relate its long term behavior to the long term behavior of $(x_n)_{n\geq 0}$ in \ref{sect:synthese}, and end the proof in \ref{sect:main-proof}. Finally, Section \ref{sect:FV} treats the second algorithm based on a particle system.

\subsection{Main results}

Assume that $F^*$ contains $d\geq 2$ elements and let us define the unit simplex of  probability measures on $F^*$ by $\Delta = \left\{ x \in \mathbb{R}^d \  \Bigr\rvert  \, x_i \geq 0, \,  \sum_{i=1}^d x_i= 1 \right\}.$  We endow $\mathbb{R}^d$  with the classical $l^1$-norm:
$\Vert x \Vert  = \sum_{i \in F^*} |x(i)|$ and $\Delta$ with the induced distance
 (which corresponds, up to a constant, to the total variation distance). Given $x\in \Delta$, we denote by $\pi(x)$ the invariant distribution of $K[x]$, defined in \eqref{eq:Kmu}, and we let $h : \Delta \to T\Delta =  \left \{x   \in \mathbb{R}^d   \Bigr\rvert \sum_{i=1}^d x_i= 0 \right\}$ denote the vector field given by
 $h(x) = \pi(x)-x$. Our aim is to study the \textit{weighted empirical occupation measure} $(x_n)_{n\geq 0}$, defined for every $n\geq 0$ by
\begin{equation}
\label{eq:general-xn}
x_{n+1} =  (1-\gamma_n) x_n + \gamma_n \delta_{X_{n+1}} = x_n + \gamma_n (h(x_n) + \epsilon_n),
\end{equation}
where $\epsilon_n= \delta_{X_{n+1}} - \pi(x_n),$  $(\gamma_n)_{n\geq 0}$ is a decreasing sequence on $(0,1)$ verifying
\begin{equation}
\label{eq:hyp-gamma}
\sum_{n\geq 0} \gamma_n= + \infty \ \text{ and } \ \lim_{n \rightarrow + \infty} \gamma_n \ln(n)=0,
\end{equation}
and the  process $(X_n)_{n\geq0}$  satisfies \eqref{eq:transition1}. Let us set $\tau_0=0$,
\begin{equation}
\label{eq:limsup-gamma}
\tau_n= \sum_{k=1}^n \gamma_k, \text{ and } \ l(\gamma) = \limsup_{n\rightarrow + \infty} \frac{\ln(\gamma_n)}{\tau_n}.
\end{equation}
For instance, if
$$\gamma_n= A n^{-\alpha} \ln(n)^{-\beta}, A > 0, \alpha, \beta \geq 0,$$
then $$ l(\gamma) =  \left\{
      \begin{aligned}
        & 0, \ \text{ if } (\alpha,\beta) \in (0,1) \times \R_+,  \\
        &-1/A \ \text{ if } \alpha=1, \beta=0,  \\
        &- \infty \ \text{ if }  (\alpha,\beta) \in \{1\} \times (0,1].
      \end{aligned}
    \right.
$$
\begin{Rq}
\label{remweight}
The sequence  \eqref{eq:def-emp} corresponds to the choice $\gamma_n = \frac{1}{n+2}$.
More generally, let $(\omega_n)_{n\geq 0}$ be a sequence of positive number, if
$$
\gamma_n = \frac{\omega_n}{\sum_{i=0}^n \omega_k} \Leftrightarrow \omega_n =\frac{\kappa \gamma_n}{\prod_{k=0}^n (1-\gamma_i) },
$$
for some $\kappa >0$, then
$$
x_n = \frac{\sum_{i=0}^n \omega_i \delta_{X_i}}{\sum_{i=0}^n \omega_i}.
$$
Notice that with $\omega_n = n^{a}$ for $a > -1, \gamma_n\sim \frac{1+a}{n}.$
\end{Rq}

The sequence $(x_n)_{n\geq0}$ is often called a stochastic approximation algorithm with decreasing step \cite{B99,BMP,D96}. Its long time behavior can be related to the long time behavior of the flow $\Phi$ induced by $h$; namely the solution to
\begin{equation}
\label{eq:flow-principal}
  \left\{
      \begin{aligned}
        \forall t\geq 0, \forall x\in \Delta &, \ \partial_t \Phi(t,x) = h(\Phi(t,x)), \\
       \ \Phi(0,x)=x.
      \end{aligned}
    \right.
\end{equation}
 In order to state our main result, let us  introduce some notation. By Perron-Frobenius Theorem \cite[Theorem 2 p 53, Vol. 2]{G59}, eigenvalues of $P$ can be ordered as
$$
 1 > \lambda_1 \geq |\lambda_2| \geq \dots \geq |\lambda_d| \geq 0,
$$
where $\lambda_1= \lambda$ (defined in \eqref{eq:val-prop}) and $\lambda_i \neq \lambda$ for all $i \geq 2$. Set
\begin{equation}
\label{eq:R}
R= 1 - (1-\lambda) \max_{i\geq 2} \textsc{Re} \left(\frac{1}{1-\lambda_i}\right),
\end{equation}
where $\textsc{Re}$ is the real part application on $\C$. Since $|1-\lambda|<|1-\lambda_i|$ for   $i\geq 2$, we have $R>0$.

\begin{theo}[Convergence of $(x_n)_{n\geq 0}$ to the quasi-stationary distribution]
\label{th:main}
With probability one, $x_n$ tends to $\nu$. If furthermore $l(\gamma)<0$, then
$$
\limsup_{n\rightarrow + \infty} \frac{1}{\tau_n} \ln\left(\Vert x_n - \nu \Vert\right) \leq \max\left(-R, \frac{l(\gamma)}{2} \right) \ \text{ a.s.}
$$%§!!!!!!
\end{theo}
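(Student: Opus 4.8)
\textit{Proof sketch.} The plan is to read \eqref{eq:general-xn} as a stochastic approximation recursion with decreasing steps and to run the ODE method of \cite{B99}, fed by the flow analysis of Section~\ref{sect:flow} and the comparison statements of Section~\ref{sect:synthese}. A few structural facts frame the argument. Since $x_n$ is a convex combination of the Dirac masses $\delta_{X_k}$, the sequence $(x_n)_{n\ge 0}$ stays in the compact set $\Delta$, hence is bounded. For every $x\in\Delta$ the matrix $K[x]$ of \eqref{eq:Kmu} is an irreducible stochastic matrix on $F^*$ (it contains all $P$-transitions internal to $F^*$), so $\pi(x)$ is well defined, strictly positive and real-analytic in $x$; thus $h=\pi-\mathrm{Id}$ is smooth, and $\pi(x)=x$ is equivalent, by \eqref{eq:Kmu}, to $\sum_i x(i)P_{i,k}=\big(1-\sum_i x(i)P_{i,0}\big)x(k)$ for all $k$, i.e.\ to $x$ being a left Perron eigenvector of $P|_{F^*}$, which forces $x=\nu$. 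Finally, Section~\ref{sect:flow} supplies the two facts about $\Phi$ I use: $\nu$ is globally asymptotically stable, and the Jacobian $A:=Dh(\nu)$ restricted to $T\Delta$ has eigenvalues $\tfrac{1-\lambda}{1-\lambda_i}-1$ ($i\ge 2$), so its spectral abscissa equals $-R<0$ with $R$ as in \eqref{eq:R}. As $A$ need not be diagonalizable, I record the standard consequence that for each $\eta\in(0,R)$ there is a Euclidean norm $\|\cdot\|_\eta$ on $T\Delta$ with $\|(\mathrm{Id}+\gamma A)v\|_\eta\le(1-(R-2\eta)\gamma)\|v\|_\eta$ for all $v$ and all sufficiently small $\gamma>0$.

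I next handle the noise and establish the first assertion. The increment $\epsilon_n=\delta_{X_{n+1}}-\pi(x_n)$ is not a martingale increment, because $X_{n+1}$ has law $K[x_n]_{X_n,\cdot}$ and not $\pi(x_n)$, so I strip off the Markovian part with a Poisson equation: since $\mathrm{Id}-K[x]$ has one-dimensional kernel (the constants) and depends continuously on $x\in\Delta$, the centered solution $Q[x]g$ of $(\mathrm{Id}-K[x])u=g-\pi(x)g$ exists, is bounded uniformly in $x$, and is Lipschitz in $x$. Applying this to the second bracket of $\delta_{X_{n+1}}-\pi(x_n)=(\delta_{X_{n+1}}-K[x_n]_{X_n,\cdot})+(K[x_n]_{X_n,\cdot}-\pi(x_n))$, conditioning on $\mathcal F_n$, and using $x_{n+1}-x_n=O(\gamma_n)$, one obtains
$$
\epsilon_n=\Delta M_{n+1}+(u_n-u_{n+1})+r_n,
$$
where $\Delta M_{n+1}$ is a uniformly bounded $\mathcal F_{n+1}$-martingale increment (the innovations of $X_{n+1}$ and of the Poisson term), $(u_n)$ is uniformly bounded, and $\|r_n\|=O(\gamma_n)$ (the Lipschitz variation of $Q[\cdot]$ and $\pi$ along the trajectory). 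For the martingale contribution $\sum_k\gamma_k\Delta M_{k+1}$, monotonicity of $(\gamma_n)$ gives that the increments in any window $\{m:\tau_m-\tau_n\le T\}$ have summed squares at most $\gamma_n T$, so Azuma--Hoeffding yields, for each fixed $T,\eta>0$,
$$
\p\Big(\sup_{m:\,\tau_m-\tau_n\le T}\Big\|\textstyle\sum_{k=n}^{m}\gamma_k\Delta M_{k+1}\Big\|>\eta\Big)\le C_T\,e^{-c\,\eta^2/(\gamma_n T)},
$$
and $\gamma_n\ln n\to 0$ from \eqref{eq:hyp-gamma} makes this summable in $n$; by Borel--Cantelli the supremum tends to $0$ a.s., while $\gamma_n(u_n-u_{n+1})$ and the $O(\gamma_n^2)$ term are controlled by Abel summation and $\sum_n(\gamma_n-\gamma_{n+1})<\infty$. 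Hence, almost surely, $(x_n)$ is a bounded asymptotic pseudo-trajectory of $\Phi$ in the sense of Section~\ref{sect:synthese}; its limit set is an internally chain transitive set of $\Phi$, and since $\nu$ is globally asymptotically stable the only such set is $\{\nu\}$. Therefore $x_n\to\nu$ a.s.

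For the rate, assume $l(\gamma)<0$ and fix a small $\eta>0$. By the first part we may work on the a.s.\ event on which $x_n$ eventually lies in a neighborhood of $\nu$ where $h(x)=A(x-\nu)+O(\|x-\nu\|^2)$. With $e_n:=x_n-\nu\in T\Delta$ one gets $e_{n+1}=(\mathrm{Id}+\gamma_n A)e_n+\gamma_n\Delta M_{n+1}+\gamma_n(u_n-u_{n+1})+\gamma_n\rho_n$ with $\rho_n=O(\|e_n\|^2)+O(\gamma_n)$; since $\|e_n\|\to 0$, the quadratic term is $o(1)\|e_n\|$ and is absorbed into the contraction factor $1-(R-2\eta)\gamma_n$ of $\mathrm{Id}+\gamma_n A$ in the norm $\|\cdot\|_\eta$. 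Unrolling the recursion from a sliding time $n_0=n_0(n)$ with $\tau_n-\tau_{n_0}$ of fixed size, using that $\prod_{k<j<n}(\mathrm{Id}+\gamma_j A)$ has $\|\cdot\|_\eta$-norm $\lesssim e^{-(R-2\eta)(\tau_n-\tau_k)}$, and running a bootstrap on the bound $\|e_m\|\le e^{(\max(-R,l(\gamma)/2)+C\eta)\tau_m}$: the contributions of the initial block, of the telescoping term (Abel summation), and of the $O(\gamma_n^2)$ and quadratic terms are all $\le e^{(\max(-R,l(\gamma)/2)+C\eta)\tau_n}$ for $n$ large, because $\limsup_n\ln\gamma_n/\tau_n=l(\gamma)$ and $l(\gamma)<l(\gamma)/2<0$. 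It remains to bound $S_n:=\sum_{k<n}\big(\prod_{k<j<n}(\mathrm{Id}+\gamma_j A)\big)\gamma_k\Delta M_{k+1}$: it obeys the same linear recursion, whence $\E\|S_n\|_\eta^2\lesssim\sum_{k<n}e^{-2(R-2\eta)(\tau_n-\tau_k)}\gamma_k^2$, which is of order $\gamma_n$ (up to lower-order terms) because $(\gamma_k)$ decreases; comparing $\|S_n\|_\eta$ over a sequence of $\tau$-blocks of bounded length by Doob's maximal inequality and the exponential bound above, and using $\gamma_n\ln n\to 0$ once more, one gets that a.s.\ $\|S_n\|_\eta\le e^{(l(\gamma)/2+\eta)\tau_n}$ for all large $n$. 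Collecting the estimates gives $\limsup_n\tau_n^{-1}\ln\|x_n-\nu\|\le\max(-R,l(\gamma)/2)+C\eta$ a.s., and letting $\eta\downarrow 0$ concludes.

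The main obstacle is this last step: turning the second-moment bound $\E\|S_n\|_\eta^2=O(\gamma_n)$ into an \emph{almost sure} bound valid for all large $n$ under the weak step condition \eqref{eq:hyp-gamma} (which forbids $\sum\gamma_n^2<\infty$, so no direct $L^2$ martingale convergence is available), while simultaneously propagating the bound through the non-normal linear part via the $\eta$-dependent norm and checking that the telescoping Poisson correction and the possible irregularities of $(\gamma_n)$ do not degrade the exponent. This bookkeeping --- the appropriate rescaling of $e_n$ (by $e^{(R-\eta)\tau_n}$ or by $\gamma_n^{-1/2}$ according to which of the two regimes dominates), a maximal inequality over $\tau$-blocks of size of order one, and a bootstrap --- is the substance of Sections~\ref{sect:synthese} and~\ref{sect:main-proof}.
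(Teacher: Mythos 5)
Your proposal is correct in substance and, for the first assertion, follows exactly the paper's route: the same Poisson-equation decomposition of $\epsilon_n$ into a bounded martingale increment, a telescoping term and an $O(\gamma_n)$ remainder (this is Lemma \ref{lem:APT}, where these are the terms $\delta^1_n,\dots,\delta^4_n$), the exponential-Borel--Cantelli control of the martingale part under $\gamma_n\ln n\to 0$, and the conclusion via ``limit sets of bounded asymptotic pseudo-trajectories are contained in the global attractor $\{\nu\}$'' furnished by Lemma \ref{lem:cvexpo}. Where you diverge is the rate. The paper stays at the flow level: it upgrades Lemma \ref{lem:APT} to the statement that $\widehat X$ is an $l(\gamma)/2$-pseudo-trajectory and then invokes \cite[Lemma 8.7]{B99}, which combines a $-\alpha$ shadowing rate with the exponential attraction rate $-R$ of Lemma \ref{lem:cvexpo} to give $\max(-R,-\alpha)$ directly, with no linearization of $h$ at $\nu$. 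You instead linearize, identify the spectral abscissa of $Dh(\nu)$ on $T\Delta$ as $-R$ (your eigenvalue computation $\frac{1-\lambda}{1-\lambda_i}-1$ is correct and sharpens the paper's Corollary \ref{cor:gradient}, which only proves the upper bound), and run a windowed unrolling plus bootstrap on the discrete recursion $e_{n+1}=(\mathrm{Id}+\gamma_nDh(\nu))e_n+\dots$. This is essentially an inlining of the proofs of \cite[Propositions 8.3 and Lemma 8.7]{B99}; it buys self-containedness and makes visible where each of the two exponents $-R$ and $l(\gamma)/2$ comes from, at the cost of more bookkeeping (the adapted norm for the non-normal Jacobian, and the need to keep the unrolling windowed so that the telescoping Poisson correction is controlled by its boundary terms rather than by a weighted sum of the increments $\gamma_{k-1}-\gamma_k$, which for an irregular step sequence would degrade the exponent). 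Two small imprecisions, neither fatal: your intermediate claim that $\sum_{k<n}e^{-2(R-2\eta)(\tau_n-\tau_k)}\gamma_k^2$ is of order $\gamma_n$ holds only when $-l(\gamma)<2(R-2\eta)$; in the complementary regime the sum is of order $e^{-2(R-2\eta)\tau_n}$, so the a.s.\ bound on $S_n$ should read $e^{(\max(l(\gamma)/2,\,-R+2\eta)+\eta)\tau_n}$ rather than $e^{(l(\gamma)/2+\eta)\tau_n}$ --- this does not change the final maximum. Also, you reuse the symbol $A$ for $Dh(\nu)$ while the paper reserves it for the Green matrix $\sum_k\hat P^k$; choose another letter to avoid a clash.
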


This leads to the following result which generalize and precise  the rates of convergence  of \cite[Theorem 3.8]{AFP}
\begin{coro}
\label{cor:Aldous}
Suppose $\gamma_n = \frac{A}{n}$ for some $A > 0$ (or, with the notation of remark \ref{remweight}, $\omega_n = n^{A-1}$)
%If $(x_n)_{n\geq 0}$ is described by \eqref{eq:def-emp}
then for all $\theta < \min\left(R A, 1/2\right)$, there exists a random constant $C>0$ such that
$$
\forall n \geq 0, \ \Vert x_n - \nu \Vert \leq C n^{-\theta} \ \text{a.s.}
$$
\end{coro}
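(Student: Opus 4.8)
The plan is to read the corollary directly off Theorem \ref{th:main} by computing the two quantities $\tau_n$ and $l(\gamma)$ attached to the step sequence $\gamma_n = A/n$, and then converting the logarithmic estimate of the theorem into a polynomial one. First I would record the elementary asymptotics: the harmonic partial sums satisfy $\tau_n = \sum_{k=1}^n A/k = A\ln n + O(1)$, so $\tau_n \sim A\ln n$; and, since $\ln\gamma_n = \ln A - \ln n$, we get $l(\gamma) = \limsup_n \ln(\gamma_n)/\tau_n = -1/A$, which is exactly the value displayed in the example following \eqref{eq:limsup-gamma}. (If, as in the $\omega_n = n^{A-1}$ formulation of Remark \ref{remweight}, one only has $\gamma_n \sim A/n$, both asymptotics are unchanged and the argument below applies verbatim.) In particular $l(\gamma) = -1/A < 0$, so the quantitative part of Theorem \ref{th:main} is in force.

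Set $\rho = \min\bigl(R, \tfrac{1}{2A}\bigr)$, so that $\max\bigl(-R, l(\gamma)/2\bigr) = -\rho$ and $A\rho = \min\bigl(AR, \tfrac12\bigr)$. Theorem \ref{th:main} gives, almost surely,
$$\limsup_{n\to\infty}\frac{\ln\|x_n-\nu\|}{\tau_n} \le -\rho .$$
Since $x_n \to \nu$ almost surely, $\ln\|x_n-\nu\| \to -\infty$, so it is eventually negative; writing $\dfrac{\ln\|x_n-\nu\|}{\ln n} = \dfrac{\ln\|x_n-\nu\|}{\tau_n}\cdot\dfrac{\tau_n}{\ln n}$ and using $\tau_n/\ln n \to A$ then yields
$$\limsup_{n\to\infty}\frac{\ln\|x_n-\nu\|}{\ln n} \le -A\rho = -\min\bigl(AR,\tfrac12\bigr) \quad\text{a.s.}$$
The only point worth a line here is that $\limsup$ does not in general respect products; but one factor converges to the positive constant $A$ and the other is eventually bounded above by a fixed negative number, so a routine $\varepsilon$-argument gives the inequality.

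To conclude, fix $\theta < \min\bigl(AR, \tfrac12\bigr)$ and put $\varepsilon = \min\bigl(AR,\tfrac12\bigr) - \theta > 0$. By the last display there is an almost surely finite random integer $N$ such that $\ln\|x_n-\nu\| \le \bigl(-\min(AR,\tfrac12)+\tfrac{\varepsilon}{2}\bigr)\ln n \le -\theta\ln n$ for every $n \ge N$, i.e. $\|x_n-\nu\| \le n^{-\theta}$ there. For the finitely many indices $1 \le n < N$ we use the trivial bound $\|x_n-\nu\| \le 2$ (both points lie in $\Delta$), so $\max_{1\le n<N}\|x_n-\nu\|\,n^{\theta} \le 2 N^{\theta} < \infty$. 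Hence, with the random constant $C = \max\bigl(1,\,2N^{\theta}\bigr)$, we obtain $\|x_n-\nu\| \le C n^{-\theta}$ for every $n \ge 1$ (the inequality being vacuous at $n=0$), which is the assertion.

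There is essentially no genuine obstacle: the corollary is a bookkeeping consequence of Theorem \ref{th:main}. The two places that deserve care are (i) checking from the definitions \eqref{eq:limsup-gamma} that $l(\gamma) = -1/A$ for $\gamma_n = A/n$ (and more generally for $\gamma_n \sim A/n$), and (ii) the passage from a $\limsup$ normalized by $\tau_n$ to one normalized by $\ln n$, which is immediate once one observes $\tau_n/\ln n \to A$.
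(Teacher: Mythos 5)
Your argument is correct and is exactly the route the paper takes: the paper's proof of Corollary \ref{cor:Aldous} is the one-line observation that for $\gamma_n=A/n$ the limsup defining $l(\gamma)$ is a limit (equal to $-1/A$) so the bound follows directly from Theorem \ref{th:main}, and your computation of $\tau_n\sim A\ln n$, the passage from normalization by $\tau_n$ to normalization by $\ln n$, and the absorption of the finitely many initial indices into the random constant $C$ are just the details the authors leave implicit.
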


Using general results on stochastic approximation, we are also able to quantify more precisely this convergence, as shown by the following theorem.

\begin{theo}[Central limit theorem]
\label{th:tcl}
If one of the following conditions is satisfied
\begin{enumerate}[i)]
\item $\sum_{k\geq 0} \gamma_k = + \infty, \sum_{k\geq 0} \gamma_k^2 < \infty$ and $\lim_{k \rightarrow + \infty} \gamma_{k}^{-1} \ln(\gamma_{k-1}/\gamma_k)=0$;
\item $\sum_{k\geq 0} \gamma_k = + \infty, \sum_{k\geq 0} \gamma_k^2 < \infty$ and $\lim_{k \rightarrow + \infty} \gamma_{k}^{-1} \ln(\gamma_{k-1}/\gamma_k)=\gamma_*^{-1} < 2R$;
\end{enumerate}
then there exists a covariance matrix $V$ such that
$$
\gamma_n^{-1/2} (x_n - \nu) \underset{n \to +\infty}{\overset{d}{\longrightarrow}} \mathcal{N}(0, V).
$$
\end{theo}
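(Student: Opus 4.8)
The plan is to recognize~\eqref{eq:general-xn} as a Robbins--Monro stochastic approximation scheme with decreasing step $\gamma_n$, mean field $h$, and martingale-difference-like noise $\epsilon_n$, and then invoke a general central limit theorem for such algorithms (for instance the results in \cite{D96} or \cite{BMP}, or Chapter~3 of \cite{B99} together with the classical Kushner--Clark-type CLT). By Theorem~\ref{th:main} we already know $x_n \to \nu$ almost surely, so the algorithm is convergent and we may localize near the equilibrium $\nu$. The first step is to linearize $h$ at $\nu$: writing $H = Dh(\nu)$ (viewed as an endomorphism of the tangent space $T\Delta$), one has $h(x) = H(x-\nu) + o(\|x-\nu\|)$, and the point is to identify the spectrum of $H$ and check the stability condition needed for the CLT. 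Here I expect the eigenvalues of $H$ to be exactly $-(1-\lambda)/(1-\lambda_i)$ for $i\ge 2$ (this is presumably already computed in Section~\ref{sect:flow} when analyzing the flow $\Phi$), so that $\max_i \mathrm{Re}(\mathrm{spec}(H)) = -R < 0$ with $R$ as in~\eqref{eq:R}; this gives the required strict stability, and in case (ii) the sharper condition $\gamma_*^{-1} < 2R$ is precisely what guarantees that the rescaled iterates do not blow up, i.e. that $-H - \tfrac{1}{2}\gamma_*^{-1} I$ (resp. $-H$ in case (i), where $\gamma_*^{-1}=0$) is a Hurwitz matrix.

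The second step is to control the noise $\epsilon_n = \delta_{X_{n+1}} - \pi(x_n)$. This is not a martingale difference because $\E[\delta_{X_{n+1}}\mid\mathcal F_n] = K[x_n]_{X_n,\cdot}$, which is the one-step transition, not the invariant law $\pi(x_n)$. The standard remedy is a Poisson-equation / co-cycle decomposition: for each $x$, let $Q(x)$ solve $(I - K[x])Q(x) = K[x] - \mathbf 1\pi(x)$ on the appropriate space, and write $\epsilon_n$ as a sum of a genuine martingale increment $\Delta M_{n+1}$ plus telescoping and remainder terms of order $\gamma_n$ that are absorbed into the drift. One must check the regularity of $x\mapsto \pi(x)$ and $x\mapsto Q(x)$ (smoothness follows from the fact that $K[x]$ depends affinely on $x$ and $\nu\in\mathrm{int}\,\Delta$, so $I-K[x]$ is boundedly invertible on $T\Delta$ uniformly near $\nu$), and then compute the limiting conditional covariance $\Gamma = \lim_n \E[\Delta M_{n+1}\Delta M_{n+1}^{\!\top}\mid\mathcal F_n]$ at $x_n=\nu$; this is an explicit quantity built from the transition kernel $K[\nu]$ and its fundamental matrix evaluated at the QSD.

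The third step is purely to invoke the general theorem: under the step-size hypotheses (i) or (ii) — which are exactly the classical conditions $\sum\gamma_n=\infty$, $\sum\gamma_n^2<\infty$, plus the regularity condition on $\ln(\gamma_{n-1}/\gamma_n)$ that controls whether the effective drift is $-H$ or $-H-\tfrac12\gamma_*^{-1}I$ — together with $x_n\to\nu$ a.s., the Hurwitz property established in Step~1, and the noise analysis of Step~2, one concludes
$$
\gamma_n^{-1/2}(x_n-\nu)\ \overset{d}{\longrightarrow}\ \mathcal N(0,V),
$$
where $V$ is the unique solution of the Lyapunov equation $(H+\tfrac12\gamma_*^{-1}I)V + V(H+\tfrac12\gamma_*^{-1}I)^{\!\top} + \Gamma = 0$ (with $\gamma_*^{-1}=0$ in case (i)). I would work throughout in the tangent space $T\Delta$ so that all the matrices are genuinely $\mathbb R^{d-1}$-valued and invertibility/Hurwitz statements make sense.

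The main obstacle I anticipate is the noise-handling in Step~2: one has to verify that the Poisson-equation correction terms really are of the right order and that the resulting martingale satisfies a Lindeberg condition (here trivial, since $\epsilon_n$ is bounded) and has a convergent conditional covariance — the boundedness makes the moment bounds easy, but writing the co-cycle decomposition cleanly, and checking that the $x\mapsto\pi(x)$, $x\mapsto Q(x)$ maps are $C^1$ with the derivatives entering the limiting variance correctly, is the step that requires genuine care. The spectral computation in Step~1 should be routine given the flow analysis already carried out in Section~\ref{sect:flow}.
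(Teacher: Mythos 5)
Your proposal follows essentially the same route as the paper's proof, which verifies the hypotheses of Fort's CLT for stochastic approximation with controlled Markov chain dynamics (\cite{F13}, Theorem 2.1): stability comes from the bound $\mathrm{Re}\bigl(\mathrm{spec}(Dh(\nu))\bigr)\le -R$ of Corollary \ref{cor:gradient} (this is where $\gamma_*^{-1}<2R$ enters), and the noise is handled by the Poisson equation $(I-K[x])Q[x]=I-\boldsymbol{\pi}(x)$, splitting $\gamma_n\epsilon_n$ into a martingale increment plus telescoping and $O(\gamma_n^2)$ remainders, exactly as you outline. One point in your Step~2 needs adjusting: the conditional covariance $\E[\Delta M_{n+1}\Delta M_{n+1}^{\top}\mid\mathcal F_n]$ is a function $F_{x_n}(X_n)$ of the pair $(x_n,X_n)$ and does \emph{not} converge pointwise, since $X_n$ keeps moving even after $x_n\to\nu$; what one must prove (and what the general theorems actually require) is that its $\gamma$-weighted averages converge to $\Gamma=\sum_{k}F_\nu(k)\nu(k)$, and in the paper this is itself a second Poisson-equation argument applied to $x\mapsto F_x$, combined with Burkholder's inequality for the martingale part and a telescoping bound for the rest --- not a direct limit computation. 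Two smaller slips: the eigenvalues of $Dh(\nu)$ on $T\Delta$ are $(1-\lambda)/(1-\lambda_i)-1$, not $-(1-\lambda)/(1-\lambda_i)$ (only the former has maximal real part $-R$; in fact the paper never computes the spectrum explicitly, Corollary \ref{cor:gradient} extracts the bound directly from the flow estimate of Lemma \ref{lem:cvexpo}); and the matrix required to be Hurwitz is $Dh(\nu)+\tfrac12\gamma_*^{-1}I$ itself, not its negative, consistent with the Lyapunov equation you write for $V$.
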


From Ces\`{a}ro Theorem, if Assumption \textit{ii)} of the last theorem holds then $\gamma_*^{-1} = -l(\gamma)$. In particular, under this assumption, the limiting result of Theorem \ref{th:main} is an equality. In case $\gamma_n =1/n$  this convergence result has been already proved when in the recent work \cite{BGZ} with a similar approach. Furthermore, this gives the following trivial consequence.

\begin{coro}[$L^p-$bound for the convergence of $(x_n)_{n\geq 0}$]
\label{cor:tcl}
Under the previous assumptions, there exists for all $p \geq 1$ $C_p > 0$ such that for every $n\geq 0$,
$$
\lim_{n \rightarrow \infty}  \gamma_n^{-1/2}\E\left[ \sum_{i\in F^*} | x_n(i) - \nu(i) |^p \right]^{1/p} = C_p
$$
\end{coro}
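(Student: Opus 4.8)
Proof plan for Corollary \ref{cor:tcl} ($L^p$-bound).

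The plan is to deduce the $L^p$ statement from the central limit theorem of Theorem \ref{th:tcl} together with a uniform integrability argument. The CLT already gives $\gamma_n^{-1/2}(x_n - \nu) \xrightarrow{d} \mathcal{N}(0,V)$, and hence, writing $Z_n = \gamma_n^{-1/2}(x_n - \nu)$, the scalar random variables $\sum_{i \in F^*} |Z_n(i)|^p$ converge in distribution to $\sum_{i \in F^*} |G(i)|^p$ where $G \sim \mathcal{N}(0,V)$; call $C_p^p := \E\bigl[\sum_{i} |G(i)|^p\bigr]$, which is finite since Gaussian vectors have moments of all orders. To upgrade convergence in distribution of these nonnegative variables to convergence of their expectations (equivalently of the $L^p$-norms $\E[\cdot]^{1/p}$), it suffices to show that the family $\bigl(\sum_{i} |Z_n(i)|^p\bigr)_{n \geq 0}$ is uniformly integrable, i.e. that for some $q > 1$ one has $\sup_n \E\bigl[\bigl(\sum_i |Z_n(i)|^p\bigr)^q\bigr] < \infty$, or more simply $\sup_n \E\bigl[\|Z_n\|^{pq}\bigr] < \infty$ for some $q>1$.

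First I would record the equivalence of norms on $\mathbb{R}^d$ so that it is enough to bound a single higher moment $\sup_n \gamma_n^{-r/2}\E\bigl[\|x_n - \nu\|^r\bigr]$ for $r = pq > p$; since $p \geq 1$ is arbitrary this amounts to proving that \emph{all} moments of $\gamma_n^{-1/2}(x_n-\nu)$ are bounded uniformly in $n$. The natural route is a recursion on $u_n^{(r)} := \E[\|x_n - \nu\|^r]$ using the decomposition \eqref{eq:general-xn}, $x_{n+1} - \nu = (x_n - \nu) + \gamma_n h(x_n) + \gamma_n \epsilon_n$, exactly as in the proof of Theorem \ref{th:main}: near $\nu$ the drift $h$ contracts the relevant norm at rate governed by $R$ (this is the linearization of $h$ at the fixed point $\nu$, whose spectrum gives $R$ through \eqref{eq:R}), the martingale increments $\epsilon_n = \delta_{X_{n+1}} - \pi(x_n)$ are bounded by $2$, and standard stochastic-approximation moment estimates (Burkholder--Davis--Gundy for the martingale part, Taylor expansion for the drift, and the hypotheses $\sum \gamma_k^2 < \infty$, $\gamma_k^{-1}\ln(\gamma_{k-1}/\gamma_k) \to \gamma_*^{-1} < 2R$ which guarantee $\gamma_n^{-1}\E[\|x_n-\nu\|^2]$ stays bounded) close the induction on $r$. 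Concretely I would cite the $L^p$ versions of the results in \cite{B99,BMP,D96} that accompany the CLT: under the stated step-size conditions, $\sup_n \gamma_n^{-r/2}\E[\|x_n-\nu\|^r] < \infty$ for every $r \geq 1$.

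With the uniform moment bound in hand, the family $\{\|Z_n\|^p\}_n$ is bounded in $L^q$ for every $q$, hence uniformly integrable, so convergence in distribution of $\sum_i |Z_n(i)|^p$ to $\sum_i |G(i)|^p$ upgrades to convergence of expectations: $\E\bigl[\sum_i |Z_n(i)|^p\bigr] \to C_p^p$, and taking $p$-th roots gives $\gamma_n^{-1/2}\E\bigl[\sum_i |x_n(i)-\nu(i)|^p\bigr]^{1/p} \to C_p$ as claimed. The main obstacle is the second paragraph: establishing the uniform-in-$n$ boundedness of the rescaled higher moments $\gamma_n^{-r/2}\E[\|x_n-\nu\|^r]$. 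This is the genuinely technical point — it requires the sharp contraction rate $R$ at $\nu$, an induction over the moment order $r$ (each step using the bound for $r-1$ or $r/2$), and careful handling of the deterministic step-size factors $\ln(\gamma_{k-1}/\gamma_k)$; but it is exactly the kind of estimate that underlies any CLT proof for stochastic approximation and is available off the shelf from the references already invoked for Theorem \ref{th:tcl}.
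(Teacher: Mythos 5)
Your plan and the paper's proof share the same starting point (deduce the $L^p$ statement from the convergence in distribution of Theorem \ref{th:tcl}), but the paper's entire argument is one sentence: ``the $L^p$-norms are continuous bounded functions on $\Delta$, thus the result is straightforward.'' You are right that this glosses over the real issue: the map $z\mapsto \sum_{i}|z(i)|^p$ is applied to the \emph{rescaled} variable $Z_n=\gamma_n^{-1/2}(x_n-\nu)$, whose norm can be as large as $2\gamma_n^{-1/2}\to\infty$, so boundedness on $\Delta$ is beside the point and convergence in distribution alone does not give convergence of the expectations. Your uniform-integrability step --- reducing to $\sup_n \gamma_n^{-r/2}\E\left[\Vert x_n-\nu\Vert^r\right]<\infty$ for all $r$, which holds here because the noise $\epsilon_n$ is bounded, the linearization $Dh(\nu)$ contracts at rate $R$ (Corollary \ref{cor:gradient}), and the step sizes satisfy the conditions of Theorem \ref{th:tcl} --- is exactly the missing ingredient, and such moment bounds are standard in the stochastic-approximation literature (they underlie the CLT of \cite{F13} in the first place). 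So your route is correct and is in fact the rigorous completion of the paper's one-liner; its only weakness is that the key uniform moment bound is cited rather than proved, whereas a self-contained treatment would require running the recursion on $\E\left[\Vert x_n-\nu\Vert^{2r}\right]$ that you sketch. What the paper's version buys is brevity at the cost of a genuine (if repairable) gap; what yours buys is correctness at the cost of importing a nontrivial auxiliary estimate.
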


Note that this result extends \cite[Theorem 1.2]{DM06} and \cite[Theorem 2.2]{DM04} when considering a finite state space and this particular type of kernel $K[x]$.

Finally, not only the (weighted) empirical occupation measure of $(X_n)_{n\geq 0}$ converges almost surely to $\nu$ but $(X_n)$ itself converges in distribution to $\nu$ as shown by the next result.

\begin{coro}[Convergence in law to $\nu$]
\label{cor:Xn}
Let $(\mu_n)_{n\geq 0}$ be the sequence of laws of $(X_n)_{n\geq 0}.$ Then
$$
\lim_{n \rightarrow + \infty} \Vert \mu_n - \nu \Vert=0.
$$
If we furthermore assume that the assumptions of Theorem \ref{th:tcl} hold, there exists  $C > 0$ and $0 < \rho < 1$ such that
$$
\Vert \mu_{n+p} - \nu \Vert \leq C (\rho^p +   p \sqrt{\gamma_n}).
$$
\end{coro}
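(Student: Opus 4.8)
The plan is to control $\mu_n$ via the conditional law of $X_{n+1}$ given $\mathcal F_n$. Since $X_{n+1}$ has conditional law $K[x_n]_{X_n,\cdot}$, taking expectations gives $\mu_{n+1} = \E[\,e_{X_n}^\top K[x_n]\,]$, where $e_i$ is the $i$-th coordinate vector. The idea is to compare $K[x_n]$ with $K[\nu]$ and exploit two facts: first, $\nu$ is the invariant distribution of $K[\nu]$ (this is precisely the fixed-point relation $\pi(\nu)=\nu$, equivalent to $h(\nu)=0$, which holds because $\nu$ is the QSD); second, $K[\nu]$ is an irreducible aperiodic stochastic matrix on $F^*$, hence has a spectral gap, so there exist $C>0$ and $\rho\in(0,1)$ with $\Vert \eta K[\nu]^p - \nu\Vert \le C\rho^p$ for every probability measure $\eta$ on $F^*$. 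The map $x\mapsto K[x]$ is Lipschitz (it is affine in $x$ by \eqref{eq:Kmu}), so $\Vert K[x_n]-K[\nu]\Vert_{\mathrm{op}} \le L\Vert x_n-\nu\Vert$ for a constant $L$ depending only on $P$.

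For the qualitative statement, first note that by Theorem \ref{th:main}, $x_n \to \nu$ almost surely, hence $\E\Vert x_n-\nu\Vert \to 0$ by bounded convergence (everything lives in the compact simplex). Then write, for fixed $p$,
\[
\mu_{n+p} - \nu = \E\big[\, e_{X_n}^\top K[x_n]K[x_{n+1}]\cdots K[x_{n+p-1}] \,\big] - \nu .
\]
Replacing each $K[x_{n+k}]$ by $K[\nu]$ one factor at a time, the telescoping error is bounded by $L\sum_{k=0}^{p-1}\E\Vert x_{n+k}-\nu\Vert$, while the main term is $\E[e_{X_n}^\top]K[\nu]^p - \nu = \mu_n K[\nu]^p - \nu$, whose norm is at most $C\rho^p$. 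Thus
\[
\Vert \mu_{n+p}-\nu\Vert \;\le\; C\rho^p + L\sum_{k=0}^{p-1}\E\Vert x_{n+k}-\nu\Vert .
\]
Letting $n\to\infty$ with $p$ fixed kills the sum, giving $\limsup_n \Vert\mu_{n+p}-\nu\Vert \le C\rho^p$, and then $p\to\infty$ yields $\Vert\mu_n-\nu\Vert\to 0$.

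For the quantitative bound under the hypotheses of Theorem \ref{th:tcl}, I would use Corollary \ref{cor:tcl} (the $L^p$-bound with $p=1$): there is a constant such that $\E\Vert x_m - \nu\Vert \le C'\sqrt{\gamma_m}$ for all large $m$. Since $(\gamma_n)$ is decreasing, $\E\Vert x_{n+k}-\nu\Vert \le C'\sqrt{\gamma_n}$ for $0\le k\le p-1$, so the sum above is at most $C'p\sqrt{\gamma_n}$, and combined with the $C\rho^p$ term this gives exactly $\Vert\mu_{n+p}-\nu\Vert \le C(\rho^p + p\sqrt{\gamma_n})$ after renaming constants. The main obstacle is the bookkeeping in the telescoping estimate — making sure the operator norm on $(T\Delta,\Vert\cdot\Vert)$ is the right one so that the contraction $\Vert \eta K[\nu]^p-\nu\Vert\le C\rho^p$ and the Lipschitz bound on $x\mapsto K[x]$ compose cleanly — together with checking that Corollary \ref{cor:tcl}'s conclusion indeed supplies the uniform-in-$k$ bound $\E\Vert x_{n+k}-\nu\Vert \lesssim \sqrt{\gamma_n}$; the spectral-gap input for $K[\nu]$ is standard since aperiodicity of $P$ is assumed for \eqref{eq:QLD}-type statements and carries over to $K[\nu]$.
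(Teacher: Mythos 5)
Your strategy is essentially the paper's: compare the one-step evolution of $\mu_n$ with the fixed kernel $K[\nu]$, use the spectral gap of $K[\nu]$ for the main term, telescope, and feed in Corollary \ref{cor:tcl} for the $\sqrt{\gamma_n}$ rate. Two points need repair. First, your justification of the spectral gap (that aperiodicity of $P$ is assumed and carries over to $K[\nu]$) does not work here: the paper explicitly makes \emph{no} aperiodicity assumption on $P$. Aperiodicity of $K[\nu]$ must be proved directly, which is easy: by irreducibility $\nu(i)>0$ for all $i$, so $K[\nu]_{ii}\ge P_{i,0}\,\nu(i)>0$ for every $i$ with $P_{i,0}>0$, and such an $i$ exists; this is exactly what the paper does. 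Second, the identity $\mu_{n+p}=\E\bigl[e_{X_n}^{\top}K[x_n]K[x_{n+1}]\cdots K[x_{n+p-1}]\bigr]$ is not literally correct: the matrices $K[x_{n+k}]$ are random and path-dependent (already $x_{n+1}$ depends on $X_{n+1}$), so the expected matrix product, which sums over all intermediate states with the same random matrices, is not the law of $X_{n+p}$. The clean bookkeeping is the one-step comparison
$$
\Vert \mu_{m+1}-\mu_m K[\nu]\Vert=\sum_{j\in F^*}\bigl|\E\bigl[P_{X_m,0}\,(x_m(j)-\nu(j))\bigr]\bigr|\le \max_{i\in F^*}P_{i,0}\;\E\Vert x_m-\nu\Vert,
$$
followed by the telescoping $\mu_{n+p}-\mu_nK[\nu]^p=\sum_{i=0}^{p-1}\bigl(\mu_{n+i+1}-\mu_{n+i}K[\nu]\bigr)K[\nu]^{p-i-1}$, using that right multiplication by a stochastic matrix does not increase the $l^1$ norm of a signed measure; this yields exactly your bound and is the paper's route. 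With these two fixes your argument is complete. For the qualitative statement the paper invokes the asymptotic-pseudo-trajectory/global-attractor machinery for the discrete dynamics $x\mapsto xK[\nu]$, whereas your elementary double limit ($n\to\infty$ with $p$ fixed, then $p\to\infty$) is equally valid and more self-contained.
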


%The previous corollary and its proof are less usual in the study of stochastic algorithm and reinforced random walk; this proof is linked to the property of $K[\nu]$ and the previous result.

Proofs of these results are given in section \ref{sect:proof} and in particular in \ref{sect:synthese}.

\section{Study of the flows and proofs of our main results}
\label{sect:proof}
As explained in the introduction, the proof is based on the ODE method. We study $\Phi$ and apply its properties to $(x_n)_{n\geq 0}$ with classical results on perturbed ODE. So we decompose this section into three subsections: the study of the flow $\Phi$, the study of the noise $(\epsilon_n)_{n\geq 0}$ and finally the proof of the main theorems.

\subsection{Analysis of the flow}
\label{sect:flow}
For any $x, y\in \R^d$, we will use the following notation:
$$
\langle x,y \rangle = \sum_{i \in F^*} x(i)y(i),
$$
and $\mathbf{1}$ will denote the unit vector; namely $\mathbf{1}(i)=1$ for every $i\in F^*$. Let us begin by giving a more tractable expression for $\pi$. As $\hat{P}=(P_{i,j})_{i,j\in F^*}$ is sub-stochastic, the matrix $A=\sum_{k\geq 0} \hat{P}^k$ is well defined and is the inverse of $I-\hat{P}$, where $I$ stands for  the identity matrix. Furthermore,
\begin{equation}
\label{eq:pi=A}
\forall x\in \Delta, \ \pi(x) = \frac{x A}{\langle  x A, \mathbf{1}\rangle}.
\end{equation}
Indeed, let $\gamma = \sum_{i \in F^*} \pi(x)(i) P_{i,0}.$  Then
$$
\pi(x) K[x] = \pi(x) \Leftrightarrow \pi(x) \cdot (\hat{P}-I) = - \gamma x \Leftrightarrow  \pi(x)  =   \gamma x \cdot (I-\hat{P})^{-1} = \gamma x \cdot A,
$$
and as $\pi(x) \in \Delta$, we have
$$
1=\sum_{i \in F^*} \pi(x)(i) = \gamma \sum_{i \in F^*}  (x \cdot A)(i)= \gamma \langle  x A, \mathbf{1}\rangle.
$$
%\comment{Je ne sais pas si je m'etale trop sur une propriete trop simple. Je n'ai pas l'impression que ce soit evident}

%Expression \eqref{eq:pi=A} shows that $\Phi$ "almost" follows the flow induced by the linear vector field $A$.
The next lemma follows from classical results on linear dynamical systems.
%(Robinson clark?)
\begin{lem}[Long time behavior of $\Phi$]
\label{lem:cvexpo}
For all $\alpha \in (0, R)$, there exists $C=C_\alpha>0$  such that for all $x\in \Delta$ and $t\geq 0$, we have
\begin{equation}
\label{eq:borne-expo}
 \Vert \Phi(t,x) -  \nu \Vert \leq C e^{-\alpha t} \Vert \Phi( 0,x) -  \nu \Vert.
\end{equation}
\end{lem}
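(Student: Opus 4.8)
The plan is to use that $h$ is a projective (replicator-type) vector field in order to reduce the analysis of $\Phi$ to that of a genuinely linear flow. First I would record that $\nu$ is an equilibrium: the eigenvector property \eqref{eq:vect-prop} gives $\nu(I-\hat P)=(1-\lambda)\nu$, hence $\nu A=\tfrac{1}{1-\lambda}\nu$, and plugging this into \eqref{eq:pi=A} yields $\pi(\nu)=\nu$, i.e.\ $h(\nu)=0$. Next, setting $\tilde h(y)=yA-\langle yA,\mathbf 1\rangle y$, a one-line computation gives $\tilde h(y)=\langle yA,\mathbf 1\rangle\,h(y)$ on $\Delta$, and the standard projectivization argument shows that $s\mapsto\Psi(s,x):=xe^{sA}/\langle xe^{sA},\mathbf 1\rangle$ is the flow of $\tilde h$; it is well defined and $\Delta$-valued because $e^{sA}\ge I$ entrywise, so $\langle xe^{sA},\mathbf 1\rangle\ge\langle x,\mathbf 1\rangle=1$. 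Introducing the time change $\dot\tau=\langle\Psi(\tau,x)A,\mathbf 1\rangle^{-1}$, $\tau(0)=0$, one checks that $t\mapsto\Psi(\tau(t),x)$ solves \eqref{eq:flow-principal}, hence $\Phi(t,x)=\Psi(\tau(t,x),x)$ by uniqueness. Since $A\mathbf 1$ is the finite, positive vector of mean absorption times, $\langle yA,\mathbf 1\rangle$ stays in a fixed interval $[\rho_-,\rho_+]\subset(0,\infty)$ for $y\in\Delta$; in particular $\tau(t,x)\ge t/\rho_+$.

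Second, I would analyse $\Psi$ via the spectrum of $A=(I-\hat P)^{-1}$, whose eigenvalues are $\mu_i=(1-\lambda_i)^{-1}$. By Perron--Frobenius $\mu_1=(1-\lambda)^{-1}$ is simple and strictly dominant, with left eigenvector $\nu$ and a positive right eigenvector $r$; let $P_1x=\tfrac{\langle x,r\rangle}{\langle\nu,r\rangle}\nu$ be the corresponding spectral projection and $E_2=\ker P_1$ the complementary $A$-invariant subspace. Writing $x=c_1(x)\nu+w(x)$ with $c_1(x)=\langle x,r\rangle/\langle\nu,r\rangle\ge c_0>0$ on $\Delta$ (as $x\ge0$, $x\ne0$, $r>0$) and $w(x)=(I-P_1)(x-\nu)\in E_2$, normalisation of $xe^{sA}=c_1(x)e^{\mu_1 s}\nu+w(x)e^{sA}$ gives
\[
\Psi(s,x)-\nu=\frac{\eta_s-\langle\eta_s,\mathbf 1\rangle\nu}{c_1(x)+\langle\eta_s,\mathbf 1\rangle},\qquad \eta_s:=e^{-\mu_1 s}\,w(x)\,e^{sA}.
\]
Since the spectral abscissa of $A|_{E_2}$ equals $\max_{i\ge2}\textsc{Re}(\mu_i)$, we get $\|\eta_s\|\le C_\varepsilon e^{-(g-\varepsilon)s}\|w(x)\|$ for every $\varepsilon>0$, where $g:=\mu_1-\max_{i\ge2}\textsc{Re}(\mu_i)=R/(1-\lambda)>0$ (immediate from \eqref{eq:R}). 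Splitting $s$ into a bounded range, on which Gr\"onwall gives a Lipschitz bound for $\Psi(s,\cdot)$, and $[S_\varepsilon,+\infty)$, on which $\|\eta_s\|<c_0$ so the displayed identity applies, one obtains: for every $\beta<g$ there is $C'_\beta$ with $\|\Psi(s,x)-\nu\|\le C'_\beta e^{-\beta s}\|x-\nu\|$ for all $s\ge0$ and $x\in\Delta$.

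Third, I would transfer this bound to $\Phi$ with the sharp exponent. From $\tau(t,x)\ge t/\rho_+$ together with the previous step, $\Phi(t,x)=\Psi(\tau(t,x),x)\to\nu$ uniformly in $x\in\Delta$, so $\langle\Phi(s,x)A,\mathbf 1\rangle\to\langle\nu A,\mathbf 1\rangle=(1-\lambda)^{-1}$ uniformly; hence for any $\varepsilon'>0$ there is $S_0$ with $\langle\Phi(s,x)A,\mathbf 1\rangle\le(1-\lambda)^{-1}+\varepsilon'$ for all $s\ge S_0$, whence $\tau(t,x)\ge (t-S_0)/\big((1-\lambda)^{-1}+\varepsilon'\big)$. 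Combining,
\[
\|\Phi(t,x)-\nu\|\le C'_\beta\,e^{-\beta\tau(t,x)}\|x-\nu\|\le C''\,e^{-\frac{\beta}{(1-\lambda)^{-1}+\varepsilon'}\,t}\,\|x-\nu\|,
\]
and since $\sup_{\beta<g}\beta(1-\lambda)=g(1-\lambda)=R$, given $\alpha\in(0,R)$ one picks $\beta<g$ and $\varepsilon'>0$ so that the exponent exceeds $\alpha$, absorbs the finite factor into the constant, and obtains \eqref{eq:borne-expo}.

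The delicate point is getting the \emph{optimal} exponent $R$: the crude time-change bound only yields rate $g/\rho_+=R\,(1-\lambda)^{-1}/\rho_+$, which is strictly smaller than $R$ in general, so one really must establish that $\tau(t,x)\sim(1-\lambda)\,t$ uniformly in $x$ — and that uniformity is exactly what the uniform-in-$x$ estimate of the second step provides. Everything else (the identity $g=R/(1-\lambda)$, the polynomial Jordan corrections absorbed by the $\varepsilon$'s, the Gr\"onwall bounds on finite time intervals) is routine. Alternatively one could linearise $h$ at $\nu$: one finds that $Dh(\nu)$ leaves $T\Delta$ invariant with eigenvalues $\tfrac{\lambda_i-\lambda}{1-\lambda_i}$ ($i\ge2$), hence spectral abscissa $-R$, deduce the local form of \eqref{eq:borne-expo} from the classical exponential-stability theorem, and then globalise by the same compactness argument; but the linear representation above is closer to the ``classical results on linear dynamical systems'' mentioned in the statement.
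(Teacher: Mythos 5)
Your proof is correct and follows essentially the same route as the paper: the linear flow $xe^{tA}$, its projective normalization (your $\Psi$ is the paper's $\Phi_2$), the time reparametrization linking $\Phi$ and $\Psi$, and the uniform convergence of the speed $\langle\Phi(t,x)A,\mathbf 1\rangle$ to $(1-\lambda)^{-1}$ to upgrade the crude time-change bound to the sharp exponent $R$. Your explicit spectral-projection handling of the Perron component of $x-\nu$ (which cancels in the normalized flow) is in fact more careful than the paper's bare appeal to a Dunford decomposition, which tacitly restricts the operator norm of $e^{t(A-(1-\lambda)^{-1}I)}$ to the complementary spectral subspace.
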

\begin{proof}
Let us consider $\Phi_1:(t,x)\mapsto x \cdot e^{tA}$. Writing $x= \nu + (x-\nu)$ and using $\nu A =(1-\lambda)^{-1}\nu$, it comes
\begin{equation}
\label{eq:flow-lin}
\Phi_1(t,x) =  e^{ (1-\lambda)^{-1} t} \left(\nu  + (x-\nu) e^{t(A- (1-\lambda)^{-1} I)}\right).
\end{equation}
Let
$$
\beta < (1-\lambda)^{-1} -\max_{i\geq 2} \mathrm{Re}((1-\lambda_i)^{-1}).
$$
 Using for instance a Dunford decomposition, we get that
for $t$ large enough
$$
\Vert e^{t(A- (1-\lambda)^{-1} I)} \Vert = \sup_{\Vert u \Vert =1} \Vert e^{t(A- (1-\lambda)^{-1} I)} u \Vert \leq e^{-\beta t}
$$
%Now, mimicking the techniques used to study the conditioned semigroup (see \eqref{eq:psi}), we consider the semigroup $\Phi_2$ defined by
Let now $\Phi_2$ be the semiflow on $\Delta$ defined  for all $t \geq 0$ and $x \in \Delta$ by
$$
  \Phi_2(t,x) = \frac{\Phi_1(t,x) }{\langle\Phi_1(t,x), \mathbf{1} \rangle }.
$$
For every $t\geq 0, x\in \Delta$, $\Phi_2(t,x)$ belongs trivially to $\Delta$ because $xe^{tA}$ possesses positive coordinates and $\langle \Phi_2(t,x), \mathbf{1} \rangle=1$.
It follows from \eqref{eq:flow-lin} that for some $C>0$,
$$
\forall t\geq 0, \ \Vert \Phi_2(t,x) - \nu \Vert \leq C e^{-\beta t} \Vert x  - \nu \Vert.
$$
Now, note that  $\Phi_2$ and $\Phi$  have the same orbits (up to a time re-parametrization). Indeed, differentiating in $t$, we find that
\begin{equation*}
  \left\{
      \begin{aligned}
        \forall t\geq 0, \forall x\in \Delta &, \ \partial_t \Phi_2(t,x) =  \langle  \Phi_2(t,x) A, \mathbf{1} \rangle \left(\frac{\Phi_2(t,x) A }{\langle \Phi_2(t,x) A, \mathbf{1} \rangle}  - \Phi_2(t,x)\right)   , \\
        \forall x\in \Delta &, \ \Phi_2(0,x)=x.
      \end{aligned}
    \right.
\end{equation*}
Hence,
\begin{equation}
\label{eq:phi=phi2}
\forall t\geq 0, \ \forall x\in \Delta, \ \Phi(s(t,x),x) = \Phi_2(t,x),
\end{equation}
where
$$
s(t,x)= \int_0^t  \langle \Phi_2(x,u) A , \mathbf{1} \rangle  du.
$$
This mapping  (i.e. $t\mapsto s(t,x)$) is strictly increasing because $\Phi_2(x,u)$ belongs to  $\Delta$ so that $\langle  \Phi_2(x,u)A, \mathbf{1} \rangle>0$  for all $u\geq 0$. It follows from \eqref{eq:flow-lin} that $s(t,x)/t$ tends to $(1-\lambda)^{-1}$, uniformly in $x\in \Delta$ as $t$ tends to infinity. Thus, fixing $\alpha <\beta(1- \lambda)< R$, for $t$ large enough, we have $\beta t > \alpha s(t,x)$ and, consequently,
$$
\Vert \Phi(s(t,x),x) - \nu \Vert \leq C e^{-\alpha s(t,x)} \Vert x- \nu \Vert \Leftrightarrow \Vert \Phi(s,x) - \nu \Vert \leq C e^{-\alpha s} \Vert x- \nu\Vert,
$$
for $s$ large enough. Replacing $C$ by a sufficiently larger constant, the previous inequality holds for all time and this proves the Lemma.
\end{proof}
\begin{Rq}[Probabilist interpretation of $A,\Phi_1,\Phi_2$]
%The flow $\Phi_1$ satisfies a linear equation with a positive operator $A$.
%If we had $A\mathbf{1}= 0$ then it would be a the semi-group of a continuous-time Markov chain. But the vector $A\mathbf{1}$ has positive coordinates; indeed
Observe that $A$ is the Green function defined as
$$
\forall i,j \in F^*, \ A_{i,j} = \E_i \left[\sum_{k\geq 0} \un_{Y_k = j} \right].
$$ In particular,
$$
(A\mathbf{1})_i =\E_i\left[T_0\right],$$
where $T_0=\inf\{n\geq 0 \ | \ Y_n=0 \}$. Moreover, $\Phi_1$ can be understood as the main measure of a branching particle system and  $\Phi_2$ is then the renormalized main measure. See \cite{C11} or \cite[Chapitre 4]{C-these13} for details.
\end{Rq}

\begin{coro}[Gradient estimate]
\label{cor:gradient}
Let  $Dh(\nu)$ denote the Jacobian matrix of $h$ at $\nu.$ Then
$$\lim_{t \rightarrow \infty} \frac{1}{t} \ln(\|e^{tDh(\nu)}\|) = \limsup_{t \rightarrow \infty} \frac{1}{t} \ln(\|e^{tDh(\nu)}\|) \leq -R.$$ In particular, eigenvalues of $Dh(\nu)$ have their real parts bounded by $-R$.
\end{coro}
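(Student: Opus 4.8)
The plan is to reduce the whole statement to a purely spectral fact about the matrix $Dh(\nu)$ and then to compute its spectrum explicitly from \eqref{eq:pi=A}. Recall that for any square matrix $M$ one has $\lim_{t\to\infty}\frac1t\ln\|e^{tM}\|=\max\{\mathrm{Re}\,\mu:\mu\in\mathrm{spec}(M)\}$ (the growth of $\|e^{tM}\|$ is that of the spectral abscissa, up to a polynomial factor, e.g.\ by the Jordan decomposition). In particular the limit exists, so the asserted equality of $\lim$ and $\limsup$ is automatic, and Corollary \ref{cor:gradient} reduces to showing that every eigenvalue of $Dh(\nu)$ has real part $\leq-R$.

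To compute $Dh(\nu)$ I would differentiate $\pi(x)=xA/\langle xA,\mathbf 1\rangle$ at $x=\nu$. Using $\nu A=(1-\lambda)^{-1}\nu$ and $\langle\nu A,\mathbf 1\rangle=(1-\lambda)^{-1}$, the quotient rule gives
$$
Dh(\nu)v\;=\;(1-\lambda)\bigl(vA-\langle vA,\mathbf 1\rangle\,\nu\bigr)-v,\qquad v\in\R^d.
$$
From this formula one reads off at once that $\langle Dh(\nu)v,\mathbf 1\rangle=-\langle v,\mathbf 1\rangle$, so the hyperplane $T\Delta=\{v:\langle v,\mathbf 1\rangle=0\}$ is invariant under $Dh(\nu)$, and that $Dh(\nu)\nu=-\nu$. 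Since $\R^d=T\Delta\oplus\R\nu$, it follows that $\mathrm{spec}(Dh(\nu))=\mathrm{spec}(B)\cup\{-1\}$, where $B$ denotes the restriction of $Dh(\nu)$ to $T\Delta$. The eigenvalue $-1$ is harmless: $R<1$, because $\mathrm{Re}\bigl((1-\lambda_i)^{-1}\bigr)>0$ for every $i\geq2$ (indeed $\mathrm{Re}(1-\lambda_i)\geq 1-|\lambda_i|>0$), so $-1<-R$.

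It remains to locate $\mathrm{spec}(B)$. Write $M_A\colon v\mapsto vA$ and let $Q$ be the projection of $\R^d$ onto $T\Delta$ along $\R\nu$, i.e.\ $Qv=v-\langle v,\mathbf 1\rangle\nu$; the formula above shows $Bv=(1-\lambda)Q(vA)-v$ for $v\in T\Delta$, that is $B=(1-\lambda)\,(QM_A)|_{T\Delta}-I$. Because $\R\nu$ is $M_A$-invariant (with eigenvalue $(1-\lambda)^{-1}$), in a basis adapted to $\R^d=T\Delta\oplus\R\nu$ the matrix $M_A$ is block triangular with diagonal blocks $(QM_A)|_{T\Delta}$ and $(1-\lambda)^{-1}$; since $(1-\lambda)^{-1}$ is a simple eigenvalue of $A=(I-\hat P)^{-1}$ (it is the Perron eigenvalue of the irreducible matrix $\hat P$), the spectrum of $(QM_A)|_{T\Delta}$ is $\mathrm{spec}(A)$ with that eigenvalue deleted, namely $\{(1-\lambda_i)^{-1}:i\geq2\}$. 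Hence $\mathrm{spec}(B)=\{(1-\lambda)(1-\lambda_i)^{-1}-1:i\geq2\}$, and each such eigenvalue has real part
$$
(1-\lambda)\,\mathrm{Re}\bigl((1-\lambda_i)^{-1}\bigr)-1\;\leq\;(1-\lambda)\max_{j\geq2}\mathrm{Re}\bigl((1-\lambda_j)^{-1}\bigr)-1\;=\;-R
$$
by the definition \eqref{eq:R} of $R$. Together with $-1\leq-R$ this gives $\mathrm{spec}(Dh(\nu))\subseteq\{\mathrm{Re}\leq-R\}$, which is exactly the claim.

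I expect the main obstacle to be the spectral bookkeeping of the last paragraph — in particular, justifying that the compression $(QM_A)|_{T\Delta}$ has precisely the spectrum $\{(1-\lambda_i)^{-1}:i\geq2\}$, which rests on the $M_A$-invariance of $\R\nu$ and on the simplicity of the Perron eigenvalue of $\hat P$; everything else is a one-line differentiation. A softer alternative avoids identifying the eigenvalues: $\nu$ lies in the relative interior of $\Delta$ (positivity of the QSD) and $D_x\Phi(t,\nu)=e^{tDh(\nu)}$ by the variational equation $\partial_t D_x\Phi(t,\nu)=Dh(\nu)D_x\Phi(t,\nu)$, so differentiating the estimate of Lemma \ref{lem:cvexpo} at $x=\nu$ in directions $v\in T\Delta$ yields $\|e^{tB}\|\leq C_\alpha e^{-\alpha t}$ for every $\alpha<R$, whence $\limsup_{t}\frac1t\ln\|e^{tB}\|\leq-R$; combined with the transverse eigenvalue $-1$ this reproves the corollary with slightly less effort, though without pinning down the eigenvalues.
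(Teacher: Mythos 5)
Your proposal is correct, and your main argument is genuinely different from the paper's. The paper proves the corollary ``softly'': it invokes the variational equation $\partial_t D\Phi_t(x)=Dh(\Phi_t(x))D\Phi_t(x)$ together with $\Phi_t(\nu)=\nu$ to identify $D\Phi_t(\nu)=e^{tDh(\nu)}$, and then differentiates the exponential contraction of Lemma \ref{lem:cvexpo} at $\nu$ to obtain $\Vert e^{tDh(\nu)}u\Vert\leq Ce^{-\alpha t}\Vert u\Vert$ for every $\alpha<R$ --- exactly the ``softer alternative'' you sketch in your last paragraph, and your version of it is in fact slightly more careful than the paper's, since $\nu+su\in\Delta$ for small $s$ forces $u\in T\Delta$, so the transverse direction $\R\nu$ (eigenvalue $-1$) must be treated separately, as you do. Your primary route instead computes $Dh(\nu)v=(1-\lambda)\bigl(vA-\langle vA,\mathbf 1\rangle\nu\bigr)-v$ directly and identifies the full spectrum as $\{-1\}\cup\{(1-\lambda)(1-\lambda_i)^{-1}-1 : i\geq2\}$; the block-triangularity argument and the appeal to the simplicity of the Perron eigenvalue of $\hat P$ (which is what makes the compression $(QM_A)|_{T\Delta}$ have exactly the remaining eigenvalues) are both sound, and the standard identity $\lim_t t^{-1}\ln\Vert e^{tM}\Vert=\max\mathrm{Re}\,\mathrm{spec}(M)$ disposes of the $\lim=\limsup$ claim. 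What your computation buys is more than the corollary asks for: it shows the spectral abscissa of $Dh(\nu)$ \emph{equals} $-R$ (since $R<1$), so the bound is sharp, and it makes the constant $R$ transparent without routing through the time-reparametrization analysis of Lemma \ref{lem:cvexpo}; the cost is the extra spectral bookkeeping, which you have carried out correctly.
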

\begin{proof}
Set $\Phi_t(\cdot) =\Phi(t,\cdot).$ The mapping $x\mapsto h(x)$ being  $C^\infty,$  classical results on ordinary differential equations imply that $\Phi$ is $C^\infty$ and satisfies the variational equation
$$
\partial_t D\Phi_t(x) = Dh(\Phi_t(x)) \cdot D\Phi_t(x)
$$
with initial condition $D\Phi_0(x) = I.$
Thus, because $\Phi_t(\nu)= \nu,$ $$D\Phi_t(\nu) = e^{t D h(\nu)}$$
Now, fix $t\geq 0$ and $\alpha< R.$  On the first hand, using Lemma \ref{lem:cvexpo} we get that
$$
s^{-1} \Vert \Phi_t(\nu+ su) -\Phi_t(\nu)\Vert \leq C e^{-\alpha t} \Vert u\Vert
$$
for every $s\geq 0$ and $u\in \R^d.$ Taking the limit $s\rightarrow 0$ leads to
$$
\Vert e^{t D h(\nu)} . u \Vert = \Vert D\Phi_t(\nu) \cdot u \Vert \leq C e^{-\alpha t} \Vert u\Vert.
$$
This ends the proof.
\end{proof}

\subsection{Links between $(x_n)_{n\geq 0}$ and $\Phi$}
\label{sect:synthese}
Let us rapidly recall some definitions of \cite{B99}. To this end, we define the following continuous time interpolations $\widehat{X}, \bar{X}, \bar{\epsilon}, \bar{\gamma} : \R_+ \to \R^d$ by
$$
\widehat{X}(\tau_n +s)= x_n + s\frac{x_{n+1}-x_n}{\tau_{n+1}-\tau_n}, \ \bar{X}(\tau_n +s)=x_n, \ \bar{\epsilon}(\tau_n +s)=  \epsilon_{n+1} \ \text{ and } \ \bar{\gamma}(\tau_n +s)= \gamma_{n+1},
$$
for every $n\in \N$ and $s\in [0,\gamma_{n+1})$.  We also set $m: t \mapsto \sup\{k \geq 0  \ | \ t\geq \tau_k \}$. A continuous map $Z: \R_+ \mapsto \Delta$ is called an \textit{asymptotic pseudo-trajectory} of $\Phi$ if for all $T>0$,
$$
\lim_{t\rightarrow + \infty} \sup_{0 \leq s \leq T }\Vert Z(t+s) - \Phi_{s} (Z_t) \Vert=0.
$$
Given $r<0$, it is called a $r-$pseudo-trajectory of $\Phi$ if
$$
\limsup_{t \rightarrow + \infty} \frac{1}{t} \ln\left( \sup_{0 \leq s \leq T }\Vert Z(t+s) - \Phi_{s} (Z_t) \Vert\right)\leq r,
$$
for some (or all) $T>0$. We have

\begin{lem}[Pseudo-trajectory property of $\widehat{X}$]
\label{lem:APT}
With probability one, $\widehat{X}$ is an asymptotic pseudo-trajectory of $\Phi$. If furthermore $l(\gamma)<0$ then $\widehat{X}$ is almost surely a $l(\gamma)/2$-pseudo-trajectory of $\Phi$.
\end{lem}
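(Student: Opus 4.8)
The plan is to deduce the lemma from the noise characterisation of (asymptotic, and rate-)pseudo-trajectories in \cite{B99}. Since each $x_n$ is a convex combination of Dirac masses, $\widehat X$ takes values in the compact simplex $\Delta$, on which $h=\pi-\mathrm{id}$ is $C^\infty$ — hence Lipschitz — by \eqref{eq:pi=A} together with $\langle xA,\mathbf 1\rangle\ge\langle x,\mathbf 1\rangle=1$; moreover $\sup_n\Vert\epsilon_n\Vert\le 2$. Under these conditions, and since $\sup_{0\le s\le T}\Vert\widehat X(t+s)-\Phi_s(\widehat X(t))\Vert$ is bounded (up to a $T$-dependent constant), through the usual Gronwall estimate, by the windowed noise sum
$$
U(t,T):=\sup\Bigl\{\,\bigl\Vert\textstyle\sum_{i=m(t)}^{k-1}\gamma_i\epsilon_i\bigr\Vert\ :\ k>m(t),\ \tau_k\le\tau_{m(t)}+T\,\Bigr\}
$$
plus a step-size error of order $\gamma_{m(t)}$, both assertions reduce to showing that $U(t,T)\to 0$ a.s.\ for every $T>0$ and, when $l(\gamma)<0$, that $\limsup_{t\to\infty}t^{-1}\ln U(t,T)\le l(\gamma)/2$ a.s.

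The real obstacle is that $\epsilon_n=\delta_{X_{n+1}}-\pi(x_n)$ is \emph{not} an $\mathcal F_n$-martingale increment: on $\{X_n=i\}$, $\E[\delta_{X_{n+1}}\mid\mathcal F_n]$ is the $i$-th row of $K[x_n]$, not $\pi(x_n)$. I would remove this bias via a Poisson equation. For $x\in\Delta$, $K[x]\ge\hat P$ entrywise, so $K[x]$ is irreducible (aperiodicity being irrelevant here) with invariant law $\pi(x)$; letting $\Pi_x$ be the rank-one matrix whose rows all equal $\pi(x)$, the fundamental matrix $G[x]:=(I-K[x]+\Pi_x)^{-1}-\Pi_x$ is well defined, obeys $K[x]G[x]=G[x]-I+\Pi_x$ and $G[x]\mathbf 1=0$, and is Lipschitz in $x$ on $\Delta$ (a $C^\infty$ function of the entries of $K[x]$ and $\pi(x)$, the relevant determinant staying bounded away from $0$ by compactness). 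With $\Psi_n:=\delta_{X_n}G[x_n]$, a bounded $\R^d$-valued sequence, and using $\Vert x_{n+1}-x_n\Vert\le 2\gamma_n$ together with the Lipschitz continuity of $G$ and $\pi$,
$$
\E[\Psi_{n+2}\mid\mathcal F_{n+1}]=\delta_{X_{n+1}}K[x_{n+1}]G[x_{n+1}]+O(\gamma_{n+1})=\Psi_{n+1}-\delta_{X_{n+1}}+\pi(x_{n+1})+O(\gamma_{n+1}),
$$
whence, adding $\pi(x_{n+1})-\pi(x_n)=O(\gamma_n)$, the decomposition
$$
\epsilon_n=\zeta_{n+2}+(\Psi_{n+1}-\Psi_{n+2})+r_n,\qquad \zeta_{n+2}:=\Psi_{n+2}-\E[\Psi_{n+2}\mid\mathcal F_{n+1}],\quad\Vert r_n\Vert\le C\gamma_n,
$$
with $\zeta_{n+2}$ bounded and $\E[\zeta_{n+2}\mid\mathcal F_{n+1}]=0$.

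Substituting this into $U(t,T)$ breaks each window sum into a martingale transform $\sum\gamma_i\zeta_{i+2}$, a telescoping part $\sum\gamma_i(\Psi_{i+1}-\Psi_{i+2})$, and a remainder $\sum\gamma_ir_i$. Abel summation plus monotonicity of $(\gamma_n)$ bounds the telescoping part by $2\sup_\Delta\Vert G\Vert\,\gamma_{m(t)}$, and $\Vert\sum\gamma_ir_i\Vert\le C\gamma_{m(t)}\sum_i\gamma_i=O(\gamma_{m(t)})$; both tend to $0$, and when $l(\gamma)<0$ they are $O\bigl(e^{(l(\gamma)+\eta)\tau_{m(t)}}\bigr)$, which for $\eta$ small is negligible against $e^{l(\gamma)t/2}$ since $l(\gamma)<l(\gamma)/2$. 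After reindexing $m'=i+2$, the martingale transform has increments of order $\gamma_{m(t)}$ and window quadratic variation $O(\gamma_{m(t)})$, so Azuma--Hoeffding applied coordinatewise, with a union bound over the $d$ coordinates, gives $\p\bigl(\sup_{\mathrm{window}}\Vert\cdot\Vert>\eta'\bigr)\le 2d\exp\bigl(-c\,(\eta')^2/\gamma_{m(t)}\bigr)$.

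It remains to discretise time along $t_j=jT$, noting that $m(t_j)$ increases strictly to $+\infty$ (as $\gamma_n\to 0<T$) and $\tau_{m(t_j)}\ge t_j-1$ for $j$ large. For the first assertion, $\gamma_n\ln n\to 0$ gives $\sum_n e^{-c/\gamma_n}<\infty$ for every $c>0$, so for each fixed $\eta'>0$ the probabilities $\p(U(t_j,2T)>\eta')$ are summable; Borel--Cantelli, together with $[t,t+T]\subset[t_j,t_j+2T]$ for $t\in[t_j,t_{j+1})$, yields $\limsup_tU(t,T)\le\eta'$ a.s., and $\eta'\downarrow 0$ gives $U(t,T)\to 0$ a.s. For the rate, choose $\eta>0$ with $l(\gamma)+2\eta<0$, so that $\gamma_{m(t)}\le C_\eta e^{(l(\gamma)+\eta)t}$; with $\eta'_j:=e^{(l(\gamma)/2+\eta)t_j}$ the above bound becomes $2d\exp(-c\,C_\eta^{-1}e^{\eta t_j})$, summable in $j$, and Borel--Cantelli, the same interpolation, and $\eta\downarrow 0$ give $\limsup_tt^{-1}\ln U(t,T)\le l(\gamma)/2$ a.s. The one delicate point in the argument is setting up the Poisson decomposition and tracking its telescoping and $O(\gamma_n)$ remainders; beyond that it is the standard stochastic-approximation machinery of \cite{B99}.
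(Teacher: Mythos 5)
Your argument is correct and follows the same master plan as the paper's: reduce the pseudo-trajectory property to controlling the windowed noise sums $U(t,T)$ plus a $O(\gamma_{m(t)})$ step error, de-bias $\epsilon_n$ via a solution of the Poisson equation for $K[x]$ to isolate a martingale transform, and close with an exponential concentration bound and Borel--Cantelli. The differences are instructive rather than fundamental. First, the choice of Poisson solution: the paper integrates the continuous-time semigroup, taking $Q[x]=-\int_0^\infty\bigl(e^{t(K[x]-I)}-\boldsymbol\pi(x)\bigr)\,dt$, which sidesteps periodicity by construction; you use the fundamental matrix $G[x]=(I-K[x]+\Pi_x)^{-1}-\Pi_x$ and correctly observe that irreducibility alone (no aperiodicity) makes it well defined, so the two constructions are interchangeable here. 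Second, the paper packages the resulting four-term decomposition $\gamma_n\epsilon_n=\delta_n^1+\delta_n^2+\delta_n^3+\delta_n^4$ (one martingale increment, two telescoping/step-variation pieces, one Lipschitz remainder) and then outsources the remaining analysis to \cite[Propositions 4.1, 4.4, 8.3 and Lemma 8.7]{B99}; your three-term split $\zeta+\text{telescope}+r_n$ achieves the same separation, after which you re-derive the needed estimates directly --- Abel summation for the telescope, Azuma--Hoeffding over the $d$ coordinates for the martingale transform, and an explicit Borel--Cantelli along $t_j=jT$ noting that $\gamma_n\ln n\to0$ gives $\sum_n e^{-c/\gamma_n}<\infty$. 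This makes the write-up more self-contained and makes visible exactly where hypothesis \eqref{eq:hyp-gamma} and $l(\gamma)<0$ enter, at the cost of repeating computations that \cite{B99} already provides in the Robbins--Monro setting. Both routes are valid; the paper's is shorter because it cites, yours is more transparent because it computes.
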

\begin{proof}

The proof is similar to \cite[Section 5]{B97}. %whose the main arguments come from \cite{MP87}.
Let $\boldsymbol{\pi}(x)$ be the matrix over $F^*$ be defined by   $\boldsymbol{\pi}(x)_{i,j}= \pi(x)_j.$

By irreducibility of $K[x]$ the continuous time Markov semi-group $(e^{t (K[x]-I)})_{t\geq 0}$ converge at an exponential rate toward  $\boldsymbol{\pi}(x).$ Thus,  for all $x\in \Delta$  the matrix
$$
Q[x] = - \int_0^\infty ((e^{t (K[x]-I)}) - \boldsymbol{\pi}(x)) dt
$$ is well defined.
Using that $(K[x]-I)$ is the generator of the semigroup $(e^{t (K[x]-I)})_{t\geq 0}$, it is classic (and easy) to see that $Q[x]$ is  solution to the Poisson equation:
$$
(I-K[x])Q[x]=Q[x] (I-K[x]) = I -  \boldsymbol{\pi}(x).
$$
We can write
$$
\gamma_n \epsilon_n=\delta_n^1 + \delta_n^2 + \delta_n^3 + \delta_n^4,
$$
where, for all $j\in F^*$, we have
$$
\delta_n^1(j) = \gamma_n \left( Q[x_n]_{X_{n+1},j} - K[x_n] Q[x_n]_{X_{n},j}\right),
$$
$$
\delta_n^2(j) = \gamma_n K[x_n] Q[x_n]_{X_{n},j} - \gamma_{n-1} K[x_n] Q[x_n]_{X_{n},j},
$$
$$
\delta_n^3(j) = \gamma_{n-1} K[x_n] Q[x_n]_{X_{n},j} - \gamma_n K[x_{n+1}] Q[x_{n+1}]_{X_{n+1},j},
$$
and
$$
\delta_n^4(j) = \gamma_n \left( K[x_{n+1}] Q[x_{n+1}]_{X_{n+1},j} - K[x_{n}] Q[x_{n}]_{X_{n+1},j}\right).
$$
Continuity, smoothness of $Q$ and compactness of $\Delta$ ensure the existence of $C>0$ such that
$$
\Vert \delta^2_n \Vert \leq C (\gamma_{n-1} - \gamma_n ), \ \Vert \sum_{i=n}^k \delta^3_{i} \Vert \leq C \gamma_{n-1} \ \text{ and } \ \Vert \delta^4_n \Vert \leq C\gamma_n \Vert x_{n+1} - x_n \Vert \leq C \gamma_n^2.
$$
Now, if $\mathcal{F}_n = \sigma\{ X_k \ | \ k\leq n \}$, the term $\delta^1_n$ is a $\mathcal{F}_{n-1}$-martingale increment and there exists $C_1>0$ such that $\Vert \delta^1_n \Vert^2 \leq C_1\gamma_n^2$. Namely, for every $n_0\geq 0$ , if for all $n\geq n_0+1$, $M_n = \sum_{k=n_0}^{n-1} \delta^1_k$ then $(M_n)_{n\geq n_0+1}$ is a $(\mathcal{F}_{n})_{n\geq n_0+1}$-martingale.
From these inequalities, we have
\begin{align*}
\Delta(t,T)
&=\sup_{0 \leq u \leq T} \Vert \int_t^{t+u} \bar{\epsilon}(s) ds \Vert \\
&\leq \sup_{0 \leq i \leq T} \Vert \int_{\tau_{m(t)}}^{\tau_{m(t+u)}} \bar{\epsilon}(s) ds \Vert  + \sup_{0 \leq u \leq T} \Vert \int_{\tau_{m(t)}}^{t} \bar{\epsilon}(s) ds \Vert + \sup_{0 \leq u \leq T} \Vert \int_{\tau_{m(t+u)}}^{t+u} \bar{\epsilon}(s) ds \Vert \\
&\leq \sup_{0 \leq u \leq T} \Vert \sum_{j=m(t)}^{m(t+u)-1}  \gamma_{j+1} \epsilon_{j+1} \Vert +C_2 \sup_{0 \leq u \leq T} |t- \tau_{m(t)}| + C_2 \sup_{0 \leq u \leq T} |t+u- \tau_{m(t+u)}| \\
&\leq \sup_{0 \leq u \leq T} \Vert \sum_{j=m(t)}^{m(t+u)-1} \delta^1_{j+1} \Vert + \sup_{0 \leq u \leq T} \Vert \sum_{j=m(t)}^{m(t+u)-1} \delta^2_{j+1} \Vert + \sup_{0 \leq u \leq T} \Vert \sum_{j=m(t)}^{m(t+u)-1}  \delta^3_{j+1} \Vert \\
&\quad  \ \quad \ + \sup_{0 \leq u \leq T} \Vert \sum_{j=m(t)}^{m(t+u)-1} \delta^4_{j+1} \Vert + 2 C_2 \gamma_{m(t)+1} \\
&\leq \sup_{0 \leq u \leq T} \Vert \sum_{j=m(t)}^{m(t+k)-1}  \delta^1_{j+1} \Vert + C \gamma_{m(t)}  + C \gamma_{m(t)} +  C_3 T \gamma_{m(t)+1} + C_4 \gamma_{m(t)+1},\\
%&\leq \sup_{0 \leq k \leq T} \Vert \sum_{j=m(t)}^{m(t+k)}  \delta^1_j \Vert + C \bar{\gamma} (t) + C \bar{\gamma} (t) +  C T \bar{\gamma}(t),
\end{align*}
for some $C_3,C_4>0$. If we set $U_{n+1}=\delta^1_n$ then following \cite[Proposition 4.4]{B99}, under \eqref{eq:hyp-gamma}, we see that the last term tends to zero. Using \cite[Proposition 4.1]{B99}, this proves the first part of the statement. Let us now prove that it is a $l(\gamma)/2-$pseudo-trajectory. Thanks to \eqref{eq:hyp-gamma}, Inequality $(11)$ of \cite[Proposition 4.1]{B99} and the beginning of the proof of \cite[Proposition 8.3]{B99}, it is enough to prove that $\limsup_{t\rightarrow \infty} \ln(\Delta(t,T))/t \leq l(\gamma)/2$. From the previous decomposition, it is enough to prove
$$\limsup_{t\rightarrow \infty} \frac{1}{t}\ln\left(\sup_{0 \leq k \leq T} \Vert \sum_{j=m(t)}^{m(t+k)-1}  \delta^1_{j+1} \Vert\right) \leq l(\gamma)/2 \ \text{a.s.}$$ and
again the end of the proof is the same as in the Robbins-Monro algorithm situation (see the proof of \cite[Proposition 8.3]{B99}).
\end{proof}

\subsection{Proof of the main results}
\label{sect:main-proof}

\begin{proof}[Proof of Theorem \ref{th:main}]
By Lemma \ref{lem:cvexpo}, $\{ \nu\}$ is a global attractor for $\Phi$; a global attractor is an attractor whose basin is all the space, see \cite[page 22]{B99}. Thus, it contains the limit set of every (bounded) asymptotic pseudo-trajectory (see e.g \cite[Theorem 6.9]{B99} or \cite[Theorem 6.10]{B99}). Lemma \ref{lem:APT} gives the almost-sure convergence. The second part of Theorem \ref{th:main} follows directly from  \cite[Lemma 8.7]{B99} and Lemma \ref{lem:APT}.
\end{proof}

\begin{proof}[Proof of corollary \ref{cor:Aldous}]
Since the limsup in the definition of $l(\gamma)$ is a limit, the result is a direct consequence of Theorem \ref{th:main}.
\end{proof}

\begin{proof}[Proof of Theorem \ref{th:tcl}]
Let us check that our model satisfies the assumptions of \cite[Theorem 2.1]{F13}.  Hypothesis $\mathbf{C1}$ holds because of Perron-Frobenius Theorem for $(a)$, Corollary \ref{cor:gradient} for $(c)$ ($(b)$ is trivial). Using the notations of this paper and the one of the proof of Lemma \ref{lem:APT}, we have
$$
e_{n+1} = \gamma_n^{-1} \delta^1_n \ \text{ and } \ r_n = \gamma_n^{-1} (\delta^2_n + \delta^3_n + \delta^4_n).
$$
Assumption $\mathbf{C2} (a)$ holds, Assumption $\mathbf{C2} (b)$ holds with $\mathcal{A}_m= \mathcal{A}_{m,k}= \Omega$, where $\Omega$ is our probability space. Note that $x_n\rightarrow \nu$ with probability one.

Assumption $\mathbf{C2} (c)$ is more tricky but usual. Indeed, let us mimic \cite[Section 4]{F13}. Let
$$
F_x(X)_{i,j}=  (\sum_{k \in F^*} Q[x]_{k,j} Q[x]_{k,i} K[x]_{X,k}) -K[x]Q[x]_{X,i} K[x]Q[x]_{X,j}
$$
be a kind of covariance matrix. We have
\begin{align*}
\E\left[ e_{n+1}(i) e_{n+1}(j) \ | \ \mathcal{F}_n \right]
&= F_{x_n}(X_n)_{i,j}= (U_*)_{i,j} + (D^1_n)_{i,j} + (D^2_n)_{i,j},
\end{align*}
where $U_* = \sum_{k \in F^*} F_{\nu}(k) \nu_k$,
$$
D^1_n= \sum_{k \in F^*} (F_{x_n}(k) \pi(x_n)_k - F_{\nu}(k) \nu_k)
$$
which tends almost surely to $0$ thanks to Theorem \ref{th:main} and
$$
D^2_n= F_{x_n}(X_n) -\sum_{k \in F^*} F_{x_n}(k) \pi(x_n)_k.
$$
It rests to prove that
$$
\lim_{n \rightarrow \infty} \gamma_n \E\left[\Vert \sum_{m=1}^n D^2_m \Vert \right] = \lim_{n \rightarrow \infty} \gamma_n \E\left[\Vert \sum_{m=1}^n \left( F_{x_m}(X_m) -\sum_{k \in F^*} F_{x_m}(k) \pi(x_m)_k \right) \Vert \right] =0.
$$
To this end, we use again the solution of the Poisson equation $Q$ introduced in the proof of Lemma \ref{lem:APT}. Indeed following \cite{F13} we set $U_x(X)=Q[x] F_x(X)$, which satisfies
$$
(I-K[x])U_x = F_x(X) -\sum_{k \in F^*}  F_x(k) \pi(x)_k,
$$
and
$$
D^{2,a}_n= U_{x_n}(X_{n+1}) - K[x_n] U_{x_n}(X_n),
$$
$$
D^{2,b}_n= U_{x_n}(X_n) - U_{x_n}(X_{n+1}).
$$
We have $D^2_n=D^{2,a}_n + D^{2,b}_n$. Arguments that follow come from directly to \cite[page 16]{F13}. Note that, with the notations of \cite{F13}, Assumption A3 holds with constant  functions $V_1$ and $V_2$ , $b=1$, $\bar{\tau}=1$. Indeed recall that the state space is finite and then all regularity and boundedness assumptions are satisfied. From Theorem \ref{th:main}, the convergence assumptions also hold.\\
 Sequence $(D^{2,a}_n)_{n\geq 0}$ is a martingale increment sequence and, thanks to Burkholder inequality, we have
$$
\E\left[\Vert \sum_{m=1}^n D^{2,a}_m \Vert^{2} \right] \leq C n,
$$
for some $C>0$. Now Cauchy-Schwarz inequality gives
$$
\lim_{n \rightarrow \infty} \gamma_n \E\left[\Vert \sum_{m=1}^n D^{2,a}_m \Vert \right] \leq \lim_{n \rightarrow \infty} \gamma_n \sqrt{C n}=0,
$$
because $\sum_{n\geq 0} \gamma_n=+\infty$ and $\sum_{n\geq 0} 1/\sqrt{n}<+\infty$.
\\
Sequence $(D^{2,b}_n)_{n\geq 0}$ can be written as a telescopic sum and then
$$
\E\left[\Vert \sum_{m=1}^n D^{2,a}_m \Vert^2 \right] \leq C\left( 1+ \sum_{k=1}^n \gamma_k^2 \right),
$$
for some $C>0$ (again see \cite[page 16]{F13} for details). The limiting assumption is then satisfied because $\sum_{n\geq 0} \gamma_n^2<+\infty$.
 Assumption $\mathbf{C3}$ is then satisfied. Finally, the last assumption $\mathbf{C4}$ is supposed to be true in our setting.
\end{proof}

\begin{Rq}[SA with controlled Markov chain
dynamics]
Our sequence $(x_n)_{n\geq 0}$ is an instance of the so-called \textit{SA with controlled Markov chain
dynamics} introduced in \cite[Section 4]{F13}. Instead of our proof for the central limit theorem, we could use \cite[Proposition 4.1]{F13}. Nevertheless, it would weaken the assumption on $(\gamma_{n})_{n\geq 0}$ (see Assumption \textbf{A2}).
\end{Rq}

\begin{proof}[Proof of Corollary \ref{cor:tcl}]
The $L^p-$norm are continuous bounded functions on $\Delta$ thus the result is straightforward.
\end{proof}

\begin{proof}[Proof of Corollary \ref{cor:Xn}]
By irreducibility of $P$ (and hence $K[\nu]$), $\nu_i > 0$ for all $i.$ Thus,  $K[\nu]_{ii} \geq P_{i 0} \nu_i > 0$ for all $i$ such that $P_{i 0} > 0.$ This shows that  $K[\nu]$ is aperiodic. Therefore, by the ergodic theorem for finite Markov chains, there exist $C_0>0$ and $\rho \in [0,1)$ such that for all $x \in \Delta$
$$
\Vert x K^n[\nu] - \nu \Vert \leq C_0 \rho^n.
$$
In particular, $\nu$ is a global attractor for the discrete time dynamical system on $\Delta$ induced by the  map $x  \mapsto x K[\nu].$ To prove that $\mu_n \rightarrow \nu$ it then suffices to prove that $(\mu_n)$ is an asymptotic pseudo trajectory of this dynamics (that is  $\|\mu_n K[\nu] - \mu_{n+1}\| \rightarrow 0$) because the limit set of a bounded asymptotic pseudo-trajectory is contained in every global attractor (see e.g  \cite[Theorem 6.9]{B99} or \cite[Theorem 6.10]{B99}). Now,
\begin{align*}
&\Vert  \mu_n K[\nu] - \mu_{n+1}  \Vert
= \sum_{j\in F^*} \left| \mu_{n} K[\nu](j) - \mu_{n+1}(j) \right|
 = \sum_{j\in F^*} \left| \E\left[K[\nu]_{X_n,j} - K(x_n)_{X_n,j}\right] \right|\\
& = \sum_{j\in F^*} \left| \E\left[ P_{X_n,0} (\nu(j)-x_n(j)) \right] \right|  \leq \max_{i \in F^*} P_{i,0} \E\left[\Vert \nu -x_{n} \Vert\right]
\end{align*}
and the proof follows from Theorem \ref{th:main} and dominated convergence.

If one now suppose that assumptions of Corollary \ref{cor:tcl} hold, then, in view of the preceding inequality, there exists $C>0$ such that $$\Vert  \mu_n K[\nu] - \mu_{n+1}  \Vert \leq C \sqrt{\gamma_n}.$$
Therefore $$
\|\mu_{n+p} - \mu_n K[\nu]^p\| = \| \sum_{i=0}^{p-1} \left(\mu_{n+i} K[\nu] - \mu_{n+i+1}\right) K[\nu]^{p+i-1} \| \leq  C \sum_{i = 0}^{p-1} \sqrt{\gamma_{n+i}} \leq p C \sqrt{\gamma_{n}}$$ and
$$\| \mu_{n+p} - \nu \| \leq \|\mu_{n+p} - \mu_n K[\nu]^p\| + \| \mu_n K[\nu]^p - \nu \| \leq p C \sqrt{\gamma_{n}} +  C_0 \rho^p.$$

\end{proof}
%\begin{Rq}[Weaker assumptions]
%Note that Corollary \ref{cor:Xn} and Corollary \ref{cor:tcl} seems to hold under weaker assumptions than the ones of Theorem \ref{th:tcl}. Indeed, a careful reading of proofs of \cite[Proposition 4.1]{B99}, \cite[Proposition 8.3]{B99}, and \cite[Lemma 8.7]{B99} gives that assumptions of Theorem \ref{th:main} will be enough.
%\end{Rq}

\section{A second model based on interacting particles}
\label{sect:FV}
In continuous-time, a mainstream method to simulate QSD is the so-called Fleming-Viot particle system. It was introduced and well studied in \cite{BHM00} for the Brownian motion and in \cite{MM00} for general Markov processes. See also \cite{CT13,FM07,GJ12,V11}. Here, we study and give some applications of \cite{BW03} for a discrete version of this algorithm.
This one is based on a particle system evolving as follow: at each time, we choose, uniformly at random, a particle $i$ and replace it by another one $j$; this one is choosen following the probability $P_{i,j}$ or uniformly on the others particles with probability $P_{i,0}$. In this work we will study a slight modification; we allow us the choice to replace the died particle on its previous position. More precisely, let $ N\geq 2$ and consider $(X^N_n)_{n\geq 0}$ be the Markov chain on $\Delta$ with transition
\begin{equation}
\label{eq:mean-field}
\p\left( X^N(n+1) = x + \frac{1}{N}(\delta_j - \delta_i)  \ | \ X^N(n)=x \right) = p_{i,j}(x),
\end{equation}
where
\begin{equation}
\label{eq:p=K}
p_{i,j}(x) = x(i) \left(P_{i,j} + P_{i,0} x(j)\right) = x(i) K[x]_{i,j},
\end{equation}
for every $x\in \Delta$, $n \geq 0$, $i,j \in F^*$. This algorithm is relatively close to the one used in \cite{DM13,MG99} in non-linear filtering. In their setting, all particles move and die at each step. In our setting, only one particle moves at each step, and this dynamics is then closer to the continuous-time algorithm. We are interested in the limit of Markov chains $X^N$, when $N$ is large, and with the time scale $\delta = 1/N$. The key element for such approximation
is the vector field $F=(F_j)_{j\in F^*}$, defined by
$$
\forall x\in \Delta, \forall j\in F^*, \ F_j (x)= \sum_{i \neq j } (p_{i,j}(x) - p_{j,i}(x)),
$$
which, for large $N$ and short time intervals, gives the expected net increase share during the time interval, per time unit. The associated mean-field flow $\Psi$ is the solution to
\begin{equation}
\label{eq:psi}
  \left\{
      \begin{aligned}
        \forall t\geq 0, \forall x\in \Delta &, \ \partial_t \Psi(t,x) = F(\Psi(t,x)), \\
        \forall x\in \Delta &, \ \Psi(0,x)=x.
      \end{aligned}
    \right.
\end{equation}
Using \eqref{eq:p=K}, we have
$$
\forall j \in F^*, \forall x\in \Delta, \  F_j(x)= \sum_{i \in F^*} x_i (P_{i,j} + x_j P_{i,0}) -x_j,
$$
and $\Psi$ is then the conditioned semi-group of the absorbed Markov process $(U_t)_{t\geq 0}$ generated by $(P-I)$. More precisely, for all $j \in F^* $, $t\geq 0$ and $x\in \Delta$, we have
$$
\Psi(t,x) = \frac{\sum_{i \in F^*} x(i) \p\left( U_t =j \ | \ U_{0} =i \right)}{\sum_{i \in F^*} x(i)  \p\left( U_t \neq 0 \ | \ U_{0}=i \right)} = \frac{x e^{t(P-I)}}{\langle x e^{t(P-I)}, \mathbf{1} \rangle}.
$$

This model was studied in a more general setting in \cite{BW03}. In particular if we set
$$
\forall s\in [0,1), \ \bar{X}^N ((n+s)/N )= X^N_n + s (X^N_{n+1}-X^N_n),
$$
then we have

\begin{theo}[Deviation inequality]%\comment{Pas sur pour la double limit... Peut-on dire qqch en plus?}
 For every $T>0$, there exists a (explicit) constant $c=c_T>0$ such that for any $\varepsilon>0$, $x\in \Delta$ and $N$ large enough,
$$
\p\left( \max_{0 \leq t \leq T} \Vert \bar{X}^N(t) - \Psi(t,x) \Vert \geq \varepsilon  \ | \ X^N(0)= x \right) \leq 2 d e^{-c \varepsilon^2 N}.
$$
In particular,  for all $\theta<1/2$, we have
\begin{equation}
\label{eq:vitesseps}
\lim_{N \rightarrow + \infty } N^{\theta} \max_{0 \leq t \leq T} \Vert \bar{X}^N(t) - \Psi(t,x) \Vert = 0 \ \text{ a.s.}
\end{equation}
and
\begin{equation}
\label{eq:qsdps}
\lim_{N \rightarrow + \infty} \lim_{ n \rightarrow + \infty} X^N_n = \nu \ \text{ a.s.}
\end{equation}
\end{theo}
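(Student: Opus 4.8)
The plan is to recognize $(X^N_n)_{n\geq 0}$ as a mean-field Markov chain in the sense of \cite{BW03} and to verify its hypotheses, so that the deviation inequality becomes a direct application of the results there; I will also indicate the (standard) self-contained mechanism behind it. First I would compute the conditional drift: from \eqref{eq:mean-field}, on $\{X^N(n)=x\}$,
$$
\E\left[X^N(n+1)-X^N(n) \ \middle| \ X^N(n)=x\right] = \frac{1}{N}\sum_{i\neq j} p_{i,j}(x)(\delta_j-\delta_i) = \frac{1}{N} F(x),
$$
while each increment satisfies $\Vert X^N(n+1)-X^N(n)\Vert \leq 2/N$. Thus $F$ is exactly the mean-field vector field; being polynomial it is Lipschitz on the compact set $\Delta$, and $\Delta$ is positively invariant under $\Psi$ since $\Psi(t,x)$ is a probability measure for all $t$. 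These are the standing assumptions of \cite{BW03}.

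Next I would write the martingale decomposition $X^N(n+1)-X^N(n) = \frac{1}{N} F(X^N(n)) + \frac{1}{N}\eta_{n+1}$, where $(\eta_n)$ is a uniformly bounded martingale-increment sequence for the natural filtration. Over the horizon $n\leq \lceil NT\rceil$, the martingale $M^N_n = \frac{1}{N}\sum_{k\leq n}\eta_k$ has increments bounded by $C/N$, so Azuma--Hoeffding applied coordinatewise together with a union bound over the $d$ coordinates gives $\p(\max_{n\leq \lceil NT\rceil}\Vert M^N_n\Vert \geq \delta \mid X^N(0)=x) \leq 2d\, e^{-c_0\delta^2 N}$ with an explicit $c_0=c_0(T)$. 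On the complementary event, the interpolation $\bar X^N$ satisfies $\bar X^N(t) = x + \int_0^t F(\bar X^N(s))\,ds + e^N(t)$ with $\Vert e^N\Vert_{\infty,[0,T]} \leq \Vert M^N\Vert_\infty + C/N$ (the extra $C/N$ absorbing the piecewise-linear interpolation error). Subtracting the integral equation for $\Psi$ and applying Gronwall's lemma with the Lipschitz constant of $F$ yields $\max_{0\leq t\leq T}\Vert \bar X^N(t)-\Psi(t,x)\Vert \leq e^{LT}(\Vert M^N\Vert_\infty + C/N)$, which combined with the concentration estimate (after shrinking $\varepsilon$ to absorb $C/N$, valid for $N$ large) gives the stated bound with $c=c_T$.

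For the two "in particular" statements: taking $\varepsilon=N^{-\theta}$ with $\theta<1/2$ makes the right-hand side summable in $N$ (since $\sum_N e^{-c N^{1-2\theta}}<\infty$), so Borel--Cantelli yields \eqref{eq:vitesseps}. For \eqref{eq:qsdps}, I would first note that by Perron--Frobenius applied to the sub-stochastic matrix $(P_{i,j})_{i,j\in F^*}$ (top eigenvalue $\lambda$, with spectral gap), the explicit formula $\Psi(t,x) = x e^{t(P-I)}/\langle x e^{t(P-I)},\mathbf{1}\rangle$ converges to $\nu$ exponentially fast, uniformly in $x\in\Delta$; hence $\{\nu\}$ is a global attractor for $\Psi$. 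Then the deviation inequality, applied on successive windows of length $T$, forces the limit set (as $n\to+\infty$) of $X^N$ — equivalently the support of its invariant measure — into any fixed neighbourhood of $\nu$ once $N$ is large; letting $T\to+\infty$, $\varepsilon\to 0$, $N\to+\infty$ pins the double limit at $\nu$. This is the same attractor/asymptotic-pseudo-trajectory mechanism used for $\Phi$ in the proof of Corollary \ref{cor:Xn}, now applied to $\Psi$.

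\textbf{Main obstacle.} The routine pieces (drift identity, Azuma--Hoeffding, Gronwall, Borel--Cantelli) are already packaged in \cite{BW03}; the genuinely delicate point is the exchange of limits $\lim_{N}\lim_{n}$ in \eqref{eq:qsdps}. One must quantify how many steps the chain must shadow the flow for $\Psi$ to enter a prescribed neighbourhood of $\nu$, and control — uniformly enough to take the limits in this order — the small but nonzero probability of excursions of $X^N$ away from that neighbourhood. I expect this to be where the argument requires care, everything upstream being standard.
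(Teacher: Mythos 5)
Your proof follows essentially the same route as the paper: the paper simply invokes \cite[Lemma~1]{BW03} for the deviation inequality (that lemma is precisely the Azuma--Hoeffding plus Gronwall argument you sketch, and the explicit constant $c_T = e^{-2 l_F T}/\bigl(8T\sqrt{\sqrt{2}+\Vert F\Vert_2^2}\bigr)$ carries the same $e^{-2LT}$ Gronwall factor your derivation produces), Borel--Cantelli for \eqref{eq:vitesseps}, and \cite[Proposition~6]{BW03} for \eqref{eq:qsdps}. Your verification of the standing assumptions (drift equals $\tfrac1N F$, increments bounded by $2/N$, $F$ Lipschitz on the compact invariant set $\Delta$) is exactly what makes \cite{BW03} applicable, and your identification of $\{\nu\}$ as a global attractor of $\Psi$ via Perron--Frobenius is the needed input to \cite[Proposition~6]{BW03}.

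One place where your sketch should be tightened, and which you yourself flag as the delicate point: you write that the deviation inequality on successive windows ``forces the limit set (as $n\to\infty$) of $X^N$ --- equivalently the support of its invariant measure --- into any fixed neighbourhood of $\nu$.'' As stated this is not correct: for fixed $N$ the deviation bound is only $2d\,e^{-c\varepsilon^2 N}$, not zero, and the chain $X^N$ on the lattice $\Delta\cap\tfrac1N\mathbb{Z}^d$ has no absorbing state (irreducibility of $\hat P$ rules one out), so its pathwise limit set is a whole recurrent class, not a small neighbourhood of $\nu$. What \eqref{eq:qsdps} really means --- and what \cite[Proposition~6]{BW03} proves --- is that the \emph{invariant probability measures} $\pi^N$ of the chain concentrate weakly on the global attractor (here $\{\nu\}$) as $N\to\infty$, i.e.\ the inner limit should be read in distribution, not almost surely along the path. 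The mechanism is the one you have in mind (any weak limit of the $\pi^N$ is invariant for the flow and supported on its Birkhoff center), but the ``support of the invariant measure shrinks for fixed large $N$'' phrasing would not survive scrutiny; the correct statement is a limit as $N\to\infty$.
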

\begin{proof}
It comes from  \cite[Lemma 1]{BW03}, Borel-Cantelli Lemma and  \cite[Proposition 6]{BW03}. The constant $c$ is given by
$$
c= \frac{e^{-2 l_F T}}{8T\sqrt{\sqrt{2} + \Vert F \Vert_2^2}},
$$
where $l_F$ is the Lipschitz constant of $F$ on the compact set $\Delta$ and $\Vert F \Vert_2^2$ the supremum of $\Vert F(x) \Vert_2^2$ over $\Delta$.
\end{proof}

\begin{Rq}[Continuous-time case]
Firstly, if we consider our discrete-time algorithm indexed by a Poisson process, we recover the Fleming-Viot algorithm, see \cite[Section 6]{BW03} for details. This enables us to compare this result with previous works on Fleming-Viot algorithm. Articles \cite[Theorem 1]{V11} and \cite[Theorem 1.1]{MM00} give a $L^1$-bound in a more general setting (not finite state space) but to our knowledge, \eqref{eq:vitesseps} and \eqref{eq:qsdps} are the first almost-sure convergence results.\\
However, none of these works give a rate of convergence to the QSD.  Using $t=\gamma \ln(N)$ in \cite[Corollary 1.5]{CT13} (and its proof) and \cite[Remark 2.8]{CT13}, we have a uniform error term in $N^{-\gamma}$ for the approximation of the QSD, where $\gamma$ depends on the rate of convergence of the conditioned semi-group to equilibrium.  Even if our setting is in discrete time, this result can be compared with our Theorem \ref{th:main} (and Corollary \ref{cor:tcl}, more precisely).
\end{Rq}

\begin{Rq}[Time versus spatial empirical measure]
In this work, we compare two dynamics based on $K[\mu_r]$ where $\mu_r$ is either the time occupation measure or the spatial occupation measure. The analysis of the resultant flows, $\Phi$ and $\Psi$ are very similar. This analogy was already observed in others works with the Mc Kean-Vlasov equation; see \cite{B14,BLR02}.
\end{Rq}

%%%%%%%%%%%%%%%%%%%%%%%%%%%%%%%%%%%%%%%%%%%%%%%%%%%%%%%%%%%%%%%%%%%
%%                                                               %%
%% Use the two commands below for producing your bibliography    %%
%% with bibtex, then comment again the commands and include the  %%
%% content of the .bbl file in this file below the commands.     %%
%%                                                               %%
%%%%%%%%%%%%%%%%%%%%%%%%%%%%%%%%%%%%%%%%%%%%%%%%%%%%%%%%%%%%%%%%%%%

%\bibliographystyle{amsplain}
%\bibliography{yourbibfilename}

% add below the content of your .bbl file produced by bibtex.

%%%%%%%%%%%%%%%%%%%%%%%%%%%%%%%%%%%%%%%%%%%%%%%%%%%%%%%%%%%%%%%%%%%
%%                                                               %%
%% You may add acknowledgments (optional).                       %%
%%                                                               %%
%%%%%%%%%%%%%%%%%%%%%%%%%%%%%%%%%%%%%%%%%%%%%%%%%%%%%%%%%%%%%%%%%%%

\textbf{Acknowledgement} We acknowledge financial support from the Swiss National Foundation Grant  FN 200020-149871/1.
The work of B. Cloez was partially supported by the CIMI (Centre International de Math\'{e}matiques et d'Informatique), ANR-11-LABX-0040-CIMI within the program ANR-11-IDEX-0002-02
.

%%%%%%%%%%%%%%%%%%%%%%%%%%%%%%%%%%%%%%%%%%%%%%%%%%%%%%%%%%%%%%%%%%%
%%                                                               %%
%% You have reached the end of your document.                    %%
%%                                                               %%
%%%%%%%%%%%%%%%%%%%%%%%%%%%%%%%%%%%%%%%%%%%%%%%%%%%%%%%%%%%%%%%%%%%

\end{document}